\def\jobis#1{FF\fi
  \def\predicate{#1}%
  \edef\predicate{\expandafter\strip@prefix\meaning\predicate}%
  \edef\job{\jobname}%
  \ifx\job\predicate
}
\if\jobis{proposal}%
\DeclareMathOperator{\Supp}{Supp}
\DeclareMathOperator{\LCS}{LCS}
\DeclareMathOperator{\codim}{codim}
\DeclareMathOperator{\lct}{lct}
\DeclareMathOperator{\depth}{depth}
 \numberwithin{equation}{subsection}
 \numberwithin{footnote}{subsection}
\newtheorem{thm}{Theorem}[section]
 \newtheorem{cor}[thm]{Corollary}
 \newtheorem{lem}[thm]{Lemma}
 \newtheorem{prop}[thm]{Proposition}
    \newtheoremstyle{upright}%
        {8pt plus2pt minus4pt}%
        {8pt plus2pt minus4pt}%
        {\upshape}%
        {}%
        {\bfseries\scshape}%
        {}%
        {1em}%
        {}%
\theoremstyle{upright}
 \newtheorem{defn}[thm]{Definition}
 \newtheorem{rem}[thm]{Remark}
 \newcommand{\Q}{\mathbb Q}
 \newcommand{\R}{\mathbb R}
 \newcommand{\E}{\mathbb E}
 \newcommand{\F}{\mathbb F}
 \newcommand{\OO}{\mathcal{O}}
 \newcommand{\bir}{\dashrightarrow}
 \newcommand{\lin}{\sim}
 \newcommand{\num}{\equiv}
 \newcommand{\rddown}[1]{\left\lfloor{#1}\right\rfloor} % round-down
\begin{document}

\title{T\lowercase{he }LMMP\lowercase{ for log canonical 3-folds in characteristic $p>5$}}
\author{J\lowercase{oe} W\lowercase{aldron}}
\thanks{2010 MSC: 14E30, 14J30}
\date{\today}

\begin{abstract}
We prove that one can run the log minimal model program for log canonical $3$-fold pairs in characteristic $p>5$.  In particular we prove the cone theorem, contraction theorem, the existence of flips and the existence of log minimal models for pairs with log divisor numerically equivalent to an effective divisor.  These follow from our main results, which are that certain log minimal models are good.
\end{abstract}

\maketitle

\tableofcontents

\section{Introduction}

All varieties will be over an algebraically closed field $k$ of characteristic $p>5$.

The log minimal model program (LMMP) for klt threefold pairs in characteristic $p>5$ has recently been completed (\cite{keel_basepoint_1999}, \cite{hacon_three_2015}, \cite{birkar_p}, \cite{birkar_waldron}).  Here we prove some results on the LMMP for log canonical threefold pairs in characteristic $p>5$.  Our proofs rely on the LMMP for klt pairs in a crucial way.

Our main results are the following:

\begin{thm}[Good log minimal models 1]\label{lc-basepoint-free}
Let $(X,B)$ be a projective log canonical $3$-fold pair over an algebraically closed field $k$ of characteristic $p>5$ with $\R$-boundary $B$, together with a projective contraction $X\to Z$.  Suppose that $K_X+B$ is nef$/Z$ and big$/Z$.  Then $K_X+B$ is semi-ample$/Z$.
\end{thm}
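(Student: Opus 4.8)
The plan is to bootstrap from the basepoint-free theorem for klt pairs, which is available in characteristic $p>5$ by the references, using Keel's semi-ampleness criterion and induction on dimension to handle the non-klt locus; throughout set $L=K_X+B$. First I would pass to a $\Q$-factorial dlt modification $f\colon(Y,B_Y)\to(X,B)$ with $K_Y+B_Y=f^{*}L$, composed with $Y\to X\to Z$. Then $K_Y+B_Y$ is nef$/Z$ and big$/Z$, and since $f$ is birational with $f_{*}\OO_Y=\OO_X$, the projection formula gives $f_{*}\OO_Y\big(m(K_Y+B_Y)\big)=\OO_X(mL)$ and shows that basepoint-freeness$/Z$ of a multiple of $f^{*}L$ forces basepoint-freeness$/Z$ of that multiple of $L$; hence semi-ampleness descends and we may assume $(X,B)$ is $\Q$-factorial dlt. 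Writing $S=\lfloor B\rfloor$, if $S=0$ the pair is klt and the assertion is exactly the klt basepoint-free theorem, which serves as the base case.

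For $S\neq 0$ I would invoke Keel's theorem: a nef line bundle is semi-ample$/Z$ as soon as its restriction to the exceptional (non-big) locus $\E(L/Z)$ is semi-ample$/Z$. As $L$ is big$/Z$, we have $\dim\E(L/Z)\le 2$. The components of $\E(L/Z)$ lying in the klt open locus $X\setminus S$ are governed by the klt case, so the crux is to prove that $L|_{S}$ is semi-ample$/Z$. By adjunction $L|_{S}=K_S+B_S$ for a dlt pair $(S,B_S)$ of dimension $\le 2$ with $K_S+B_S$ nef$/Z$, and I would conclude by induction on dimension: applying the lower-dimensional form of the theorem, together with a further application of Keel on $S$ to remove the bigness hypothesis, reduces the question to curves, on which a numerically trivial nef divisor over $\Fpbar$ is torsion, hence semi-ample; a standard spreading-out argument then transfers this to an arbitrary algebraically closed field of characteristic $p>5$. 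Feeding $L|_{S}$ semi-ample back into Keel's criterion upstairs yields semi-ampleness of $L/Z$.

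The essential difficulty is that Kawamata--Viehweg vanishing is unavailable in characteristic $p$, so the classical strategy of lifting sections from $S$ to $X$ via the vanishing of $H^{1}\big(\OO_X(mL-S)\big)$ breaks down. The entire plan is engineered to sidestep this: Keel's criterion replaces section-lifting by a geometric descent to the low-dimensional non-big locus, and the dimension induction pushes the problem down to curves, where semi-ampleness comes for free from the torsionness of degree-zero bundles over $\Fpbar$. I expect the delicate technical points to be the precise control of $\E(L/Z)$ (ensuring that the components meeting the klt locus are genuinely handled by the klt theorem while the remainder sits over $S$), the verification that semi-ampleness descends along both the dlt modification and the relative Keel reduction, and the descent from $\Fpbar$ to a general algebraically closed field.
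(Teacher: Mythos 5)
There is a genuine gap --- in fact two --- and both occur exactly where the paper has to work hardest.

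First, your control of the exceptional locus does not hold. You claim that the components of $\E(L)$ split into those lying in the klt locus $X\setminus S$ (``governed by the klt case'') and those handled by $L|_S$. Neither half is justified. A component of $\E(L)$ can meet $S$ without being contained in $S$, and for such a component neither adjunction to $S$ nor any klt statement applies; moreover the klt base point free theorem is a statement about the pair $(X,B)$, not about restrictions of $L$ to arbitrary subvarieties inside the klt locus, so it gives no semi-ampleness of $L|_V$ for a non-big $V\subset X\setminus S$. Finally, Keel's criterion requires semi-ampleness of $L$ restricted to the whole of $\E(L)$ with its reduced structure, so componentwise arguments only suffice when the relevant pieces are disjoint. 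This is precisely the difficulty the paper flags (``this need not be true'') and resolves with its Steps 2--4: it runs a $K_Y+B_Y-\epsilon\rddown{B_Y}$-MMP with scaling of $\rddown{B_Y}$ in which every contracted ray is $K_Y+B_Y$-trivial (Lemma \ref{lem:LMMP-scaling-num-trivial}), then contracts via the klt base point free theorem applied to $K_{Y'}+B_{Y'}-\epsilon\rddown{B_{Y'}}$, reaching a model $Y''$ with $K_{Y''}+B_{Y''}\lin_\Q A+\epsilon D$, $A$ ample and $D=\rddown{B_{Y''}}$, which \emph{forces} $\E(K_{Y''}+B_{Y''})\subset\rddown{B_{Y''}}$; a further dlt modification $Y'''$ makes every connected component of the exceptional locus either contained in or disjoint from the reduced boundary, and only then can Keel be applied.

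Second, your treatment of $L|_S$ is not viable. (a) $S=\rddown{B}$ is in general non-normal, and in characteristic $p$ one cannot even assert it is $S_2$ (the characteristic-zero proof uses vanishing theorems), so ``by adjunction $(S,B_S)$ is a dlt pair'' is unavailable; the paper must pass to the $S_2$-fication, prove it is a finite universal homeomorphism from a demi-normal scheme, perform slc adjunction there, and descend semi-ampleness back along the universal homeomorphism via Proposition \ref{univhom}. (b) $L|_S$ is nef but typically nowhere big, since $S$ lies inside $\E(L)$; hence the inductive hypothesis (nef \emph{and} big) never applies to $(S,B_S)$, and applying Keel on $S$ does not lower the dimension because $\E(L|_S)$ can be all of $S$. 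The genuine input here is Tanaka's abundance for slc surfaces, a deep theorem your induction does not produce. (c) The final reduction --- numerically trivial bundles on curves over $\Fpbar$ are torsion, and ``a standard spreading-out argument'' transfers this to arbitrary algebraically closed $k$ --- is false: torsionness of numerically trivial line bundles is special to $\Fpbar$, and over any larger algebraically closed field an elliptic curve carries non-torsion degree-zero bundles. Semi-ampleness does not spread out from special fibres to the generic fibre; this is exactly why the $\Fpbar$ results of Martinelli--Nakamura--Witaszek cited in the introduction do not automatically yield the theorem over general $k$.
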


\begin{thm}[Good log minimal models 2]\label{lc-mori-contraction}
Let $(X,B)$ be a projective log canonical $3$-fold pair over an algebraically closed field $k$ of characteristic $p>5$ with $\R$-boundary $B$, together with a projective contraction $X\to Z$.  Suppose that $A$ is a big and semi-ample$/Z$ $\R$-divisor such that $K_X+B+A$ is nef$/Z$.  Then $K_X+B+A$ is semi-ample$/Z$.
\end{thm}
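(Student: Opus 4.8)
The plan is to reduce Theorem~\ref{lc-mori-contraction} to Theorem~\ref{lc-basepoint-free}, splitting according to whether or not $K_X+B+A$ is big$/Z$. First I would pass to $\Q$-coefficients: perturbing $B$ and $A$ inside their $\R$-linear equivalence classes over $Z$---keeping $(X,B)$ log canonical, $A$ big and semi-ample$/Z$, and $K_X+B+A$ nef$/Z$---one writes $K_X+B+A$ as a positive combination, with rational coefficients, of nef$/Z$ classes each again of the form $K_X+B_i+A_i$; since semi-ampleness is inherited from such a combination it suffices to treat each summand, so I assume $B$ and $A$ are $\Q$-divisors.

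Suppose first that $K_X+B+A$ is big$/Z$. Since $A$ is semi-ample$/Z$, the system $|mA/Z|$ is base point free$/Z$ for some $m>0$; a general member $C$ is reduced, and by a Bertini argument it avoids the log canonical centres of $(X,B)$ of codimension $\ge 2$ and meets $\lfloor B\rfloor$ transversally, so $(X,B+\tfrac1m C)$ is log canonical. As $A\lin_{\Q,Z}\tfrac1m C$ we have $K_X+B+A\lin_{\Q,Z}K_X+(B+\tfrac1m C)$, and the right-hand side is nef$/Z$ and big$/Z$; Theorem~\ref{lc-basepoint-free} then gives semi-ampleness$/Z$. The bigness of $K_X+B+A$ was essential here, and it is exactly what can fail in general.

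It remains to treat the case where $K_X+B+A$ is not big$/Z$, and here the semi-ampleness of $A$ must be used in earnest. As $A$ is semi-ample$/Z$ it defines a projective contraction $f\colon X\to Y$ over $Z$ with $A\lin_{\Q,Z}f^{*}A_Y$ for an ample$/Z$ $\Q$-divisor $A_Y$ on $Y$, and since $A$ is big$/Z$ this $f$ is birational. Over $Y$ the divisor $A$ is $\Q$-linearly trivial, so $K_X+B+A\lin_{\Q,Y}K_X+B$; this is nef$/Y$ (being nef$/Z$) and big$/Y$ (its generic fibre being a point), so applying Theorem~\ref{lc-basepoint-free} to $(X,B)$ and the contraction $f$ shows that $K_X+B+A$ is semi-ample$/Y$. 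Separately, for every rational $\varepsilon>0$ the divisor $(K_X+B+A)+\varepsilon A$ is nef$/Z$ and big$/Z$, hence semi-ample$/Z$ by the case already proved; as $\varepsilon\to 0$ the associated contractions stabilise to a single birational contraction $h\colon X\to W$ over $Z$, contracting precisely the curves on which both $A$ and $K_X+B+A$ are trivial. On such curves $K_X+B$ is also trivial, so $h$ is crepant for $K_X+B$ and yields a log canonical pair $(W,B_W)$, a big semi-ample$/Z$ divisor $A_W$, and a nef$/Z$ divisor $K_W+B_W+A_W$ with $K_X+B+A\lin_{\Q}h^{*}(K_W+B_W+A_W)$, reducing the problem to $W$ with $\rho(W/Z)<\rho(X/Z)$ whenever $h$ is nontrivial.

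Running this reduction inductively on the relative Picard number, I expect the main obstacle to be the base case, in which $h$ is an isomorphism: then $K_X+B+A$ is relatively ample over $Y$ but still not big$/Z$, so neither Theorem~\ref{lc-basepoint-free} nor a direct appeal to Keel's theorem (whose exceptional locus is all of $X$ once bigness fails) applies. The resolution is to combine the relative semi-ampleness over $Y$ established above with the ampleness of $A_Y$ on $Y$ over $Z$, using Keel's theorem \cite{keel_basepoint_1999} in its relative form together with the klt LMMP of \cite{birkar_waldron}, restricting to the non-klt locus by adjunction where necessary; this gluing of the map to $Y$ with the ample data on $Y$ is the genuinely delicate point, and it is here that the failure of Kawamata--Viehweg vanishing in characteristic $p$ is circumvented and the hypothesis $p>5$ is used.
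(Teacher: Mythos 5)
Your opening reductions are sound: the passage to $\Q$-coefficients (which the paper carries out via Proposition \ref{polytope}), and the big case, where replacing $A$ by a member of its $\Q$-linear system reduces the statement to Theorem \ref{lc-basepoint-free} --- though in characteristic $p$ this replacement should be justified by the perturbation result of \cite{tanaka_semiample_perturbations}, as the paper does, rather than by a naive Bertini argument, since Bertini-type statements on general members of free linear systems are unreliable in positive characteristic. Your induction on $\rho(X/Z)$ is also correctly set up: the contraction $h$ associated to $K_X+B+(1+\varepsilon)A$ contracts exactly the curves$/Z$ on which both $A$ and $K_X+B+A$ vanish, independently of $\varepsilon$; comparing the pullbacks for two values of $\varepsilon$ shows that $A$, and hence $K_X+B$, descends $\Q$-Cartier-ly to $W$; and the negativity lemma makes $h$ crepant, so $(W,B_W,A_W)$ again satisfies the hypotheses with smaller relative Picard number.

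The gap is the base case, and it is not a loose end but the entire content of the theorem. When $h$ is an isomorphism --- equivalently, $K_X+B+(1+\varepsilon)A$ is ample$/Z$ for every $\varepsilon>0$ --- you must show that the nef$/Z$ but non-big$/Z$ divisor $K_X+B+A$ is semi-ample$/Z$, and here Keel's criterion (Proposition \ref{KeelE}) gives nothing directly, because the exceptional locus $\E(L)$ of a nef divisor that is not big is all of $X$. Your proposed resolution (``combine the relative semi-ampleness over $Y$ \dots using Keel's theorem in its relative form together with the klt LMMP \dots restricting to the non-klt locus by adjunction where necessary'') is a plan, not an argument; note also that you cannot invoke the contraction theorem for lc pairs (Corollary \ref{contraction-theorem}) to contract the $(K_X+B)$-negative face on which $K_X+B+A$ vanishes, since in the paper that corollary is deduced from the very theorem being proved. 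The paper's treatment of this case is where all the machinery enters: it passes to a crepant $\Q$-factorial dlt model, runs a carefully arranged $K+B+A-\epsilon\rddown{B}$-MMP with scaling of $\rddown{B}$ whose steps are all $(K+B+A)$-trivial and which terminates by \cite[1.5]{birkar_waldron}, and then, in both the Mori-fibre-space and minimal-model outcomes, proves semi-ampleness by restricting to the reduced boundary and applying Theorem \ref{restrict} (adjunction to the demi-normal $S_2$-fication plus Tanaka's abundance for slc surfaces \cite{tanaka_slc}), combined with descent along surjections \cite[2.10]{keel_basepoint_1999} and the Keel-type semi-ampleness criterion \cite[7.1]{birkar_waldron}. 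None of these ingredients --- in particular Theorem \ref{restrict}, which is precisely what makes Keel's theorem usable once bigness fails --- appears concretely in your proposal, so the non-big case remains unproven.
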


Similar results in characteristic zero were proven in 
\cite{birkar_lc_flips}, and also in great generality in \cite{fujino_fundamental_2011} using vanishing theorems.   In place of vanishing theorems we use Keel's theorem \cite{keel_basepoint_1999}, which says that a line bundle in positive characteristic is semi-ample if and only if it is semi-ample when restricted to its exceptional locus.  We use the log minimal model program to replace $(X,B)$ with a dlt pair $(Y,B_Y)$ such that the exceptional locus of $K_Y+B_Y$ is contained in the reduced part $\rddown{B_Y}$ of $B_Y$.  We can then obtain semi-ampleness of the restriction to the exceptional locus by using adjunction to a partial normalisation of $\rddown{B_Y}$ and applying abundance for semi-log canonical surfaces \cite{tanaka_slc}. 

Thus one of the main technical results used in the proof is the following:

\begin{thm}\label{restrict}
Let $(Y,B_Y)$ be a projective $\Q$-factorial dlt $3$-fold pair over an algebraically closed field $k$ of characteristic $p>5$ with $\Q$-boundary $B_Y$, such that $K_Y+B_Y$ is nef.  Then $(K_Y+B_Y)|_{\rddown{B_Y}}$ is semi-ample.
\end{thm}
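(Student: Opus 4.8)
The plan is to restrict attention to the boundary surface $S := \rddown{B_Y}$ and to deduce the statement from abundance for semi-log canonical surfaces. If $\rddown{B_Y}=0$ there is nothing to prove, so assume $S \neq \emptyset$. Since $B_Y$ is a boundary and $Y$ is a $3$-fold, $S$ is a projective surface of pure dimension $2$, reduced but in general reducible and non-normal.

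The heart of the argument is to equip $S$ with a semi-log canonical log structure via adjunction. Because $(Y,B_Y)$ is dlt, each irreducible component of $S$ is normal, and along the codimension-one points of $S$ one forms the different $\mathrm{Diff}_{S}(B_Y - S)$, a $\Q$-boundary which I denote $B_S$. Adjunction for the reduced boundary of a dlt pair should give
\[
(K_Y + B_Y)\big|_{S} = K_{S} + B_{S},
\]
together with the fact that $(S,B_S)$ is semi-log canonical: the normalization $S^{\nu}\to S$ is a disjoint union of normal log canonical surface pairs, the conductor (double locus) contributes to the different with coefficient one, and $S$ is seminormal. I expect this step to be the main obstacle, since in characteristic $p>5$ the usual resolution- and vanishing-based arguments are unavailable; one must instead invoke the characteristic-$p$ adjunction theory for dlt threefolds and check directly that the coefficients of $B_S$ lie in $[0,1]$ and that $(S,B_S)$ is genuinely slc, so that Tanaka's theorem applies.

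With the slc structure in place the conclusion is immediate. Nefness is preserved under restriction to a closed subscheme, so $K_S + B_S = (K_Y+B_Y)|_S$ is nef. As $(S,B_S)$ is a projective semi-log canonical surface pair with $\Q$-boundary and nef log canonical divisor $K_S+B_S$, abundance for slc surfaces \cite{tanaka_slc} yields that $K_S+B_S$ is semi-ample. This is precisely the assertion that $(K_Y+B_Y)|_{\rddown{B_Y}}$ is semi-ample.
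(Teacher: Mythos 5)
Your overall strategy (adjunction to the reduced boundary, then Tanaka's abundance for slc surfaces \cite{tanaka_slc}) matches the paper's, and you correctly flag where the difficulty lies; but the step you call ``the main obstacle'' is a genuine gap that your proposal does not close, and it cannot be closed in the direct form you suggest. For $(S,B_S)$ with $S=\rddown{B_Y}$ to be semi-log canonical, $S$ must by definition be demi-normal, i.e.\ $S_2$ and nodal in codimension one; moreover Koll\'ar's adjunction \cite[4.2]{kollar_singularities_2013}, which produces the different, requires exactly this kind of hypothesis. In characteristic zero the $S_2$ property of $\rddown{B_Y}$ is \cite[17.5]{flips_and_abundance}, whose proof uses vanishing theorems; in characteristic $p$ it is not known and may fail, as the paper notes explicitly. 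So the displayed adjunction formula and the slc-ness of $(\rddown{B_Y},B_S)$ are not available, and ``checking that the coefficients of $B_S$ lie in $[0,1]$'' does not address the problem: the obstruction concerns the underlying scheme structure of $\rddown{B_Y}$, not the different. Your parenthetical claims that the conductor behaves well and that $S$ is seminormal are precisely the assertions that lack a proof in characteristic $p$.

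The paper's proof requires an additional idea which exploits positive characteristic rather than merely working around it. One replaces $\rddown{B_Y}$ by its $S_2$-fication $\pi:S\to\rddown{B_Y}$ (Proposition \ref{S2-fication}) and proves, using the structure of dlt boundaries in characteristic $p>5$ (Proposition \ref{niceintersection}: the components of $\rddown{B_Y}$ are normal and their pairwise intersections are normal curves), that $\pi$ is a finite universal homeomorphism and that $S$ is demi-normal. Adjunction in the sense of Koll\'ar then applies to $S$, giving a different $B_S$ with $\pi^*(K_Y+B_Y)\lin_\Q K_S+B_S$ and $(S,B_S)$ slc, so Tanaka's theorem yields semi-ampleness of $K_S+B_S$. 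Finally---and this is the distinctly characteristic-$p$ step for which your proposal has no counterpart---semi-ampleness descends from $S$ back to $\rddown{B_Y}$ by Keel's result that for a finite universal homeomorphism $f$ a line bundle $L$ is semi-ample if and only if $f^*L$ is (Proposition \ref{univhom}), which rests on composing universal homeomorphisms into powers of Frobenius. Without either this detour, or an independent proof that $\rddown{B_Y}$ is $S_2$ in characteristic $p$, your argument does not go through.
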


Theorems \ref{lc-basepoint-free} and \ref{lc-mori-contraction} can be used to contract an extremal ray via a projective morphism.  

\begin{cor}[Contraction theorem]\label{contraction-theorem}
Let $(X,B)$ be a projective log canonical $3$-fold pair over an algebraically closed field $k$ of characteristic $p>5$, with $\R$-boundary $B$.  Suppose $R$ is a $K_X+B$-negative extremal ray.  Then there exists a projective contraction $X\to Z$ contracting precisely the curves in $R$.
\end{cor}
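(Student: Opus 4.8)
The plan is to realise the contraction as the morphism associated to a semi-ample supporting divisor of $R$, the semi-ampleness being supplied by Theorem \ref{lc-mori-contraction}. First I would use the cone theorem for log canonical threefolds (which one obtains from the klt case, cited in the introduction, by passing to a dlt modification and perturbing the boundary) to produce a \emph{supporting function} for $R$: a nef $\R$-Cartier divisor $H$ with $H^{\perp}\cap\NEbar(X)=R$. Since $R$ is a $K_X+B$-negative extremal ray, the negative part of $\NEbar(X)$ is locally rational polyhedral near $R$ and $R$ is an isolated extremal ray there, so such an $H$ exists and may be taken to vanish on $\NEbar(X)$ exactly along $R$.

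Next I would put this into the shape required by Theorem \ref{lc-mori-contraction}. Because $H$ is nef and vanishes on $\NEbar(X)$ only along $R$, while $(K_X+B)\cdot R<0$, a Kleiman-criterion and compactness argument shows that $A:=\lambda H-(K_X+B)$ is ample for $\lambda\gg 0$: on a neighbourhood of $R$ in the unit sphere the class $-(K_X+B)$ is bounded below by a positive constant and $\lambda H\ge 0$, whereas on the (compact) complement $H$ is bounded below by a positive constant, so $\lambda H$ dominates $(K_X+B)$ once $\lambda$ is large. Then $A$ is ample, hence big and semi-ample, and $K_X+B+A=\lambda H$ is nef.

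Applying Theorem \ref{lc-mori-contraction} with $Z=\Spec k$ then gives that $\lambda H=K_X+B+A$ is semi-ample, and therefore so is $H$. Let $\phi\colon X\to Z$ be the contraction associated to the semi-ample $\R$-divisor $H$, so that $Z$ is projective and $H\sim_{\R}\phi^{*}A_Z$ for some ample $\R$-divisor $A_Z$ on $Z$; after Stein factorisation we may take $\phi$ to have connected fibres. A curve $C\subset X$ is contracted by $\phi$ if and only if $H\cdot C=0$, i.e.\ if and only if $[C]\in H^{\perp}\cap\NEbar(X)=R$; hence $\phi$ is a projective contraction contracting precisely the curves in $R$, as required.

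The main obstacle is the very first step: producing the supporting function $H$ with the \emph{exact} equality $H^{\perp}\cap\NEbar(X)=R$, which encodes the full cone-theorem description of the $K_X+B$-negative part of $\NEbar(X)$ (rationality and local discreteness of the negative rays). Once $H$ is available, the perturbation to the form $K_X+B+A$ with $A$ ample is routine, and the crucial semi-ampleness is exactly what Theorem \ref{lc-mori-contraction} provides; thus the only genuine input beyond the main theorems is the cone-theoretic structure of $\NEbar(X)$ together with Kleiman's criterion.
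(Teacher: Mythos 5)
Your proposal is correct and takes essentially the same route as the paper: both arguments produce a nef divisor of the form $K_X+B+A$ with $A$ ample whose zero locus on $\overline{NE}_1(X)$ is exactly $R$ (using Theorem \ref{lc-cone} to isolate $R$ among the finitely many negative extremal rays after an ample perturbation), and then invoke Theorem \ref{lc-mori-contraction} to conclude semi-ampleness and obtain the contraction. The only difference is bookkeeping: the paper chooses $A$ so that $R$ is the unique $K_X+B+A$-negative ray and takes the nef threshold $\lambda$, whereas you posit the supporting function $H$ first and recover ampleness of $\lambda H-(K_X+B)$ via Kleiman's criterion --- the same perturbation argument performed in the opposite order.
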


In particular this gives projective flipping contractions, and we can also apply Theorem \ref{lc-basepoint-free} to construct flips.  

\begin{cor}[Existence of flips]\label{lcflips}
Let $(X,B)$ be a projective log canonical $3$-fold pair over an algebraically closed field $k$ of characteristic $p>5$, with $\R$-boundary $B$.  Suppose there is an extremal $K_X+B$-flipping contraction $f:X\to Z$.  Then the flip of $f$ exists.
\end{cor}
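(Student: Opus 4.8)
The plan is to realize the flip of $f$ as the relative log canonical model of $(X,B)$ over $Z$: I would pass to a $\Q$-factorial dlt modification, run a minimal model program over $Z$ to make the log divisor nef$/Z$, and then invoke Theorem \ref{lc-basepoint-free} to upgrade nefness to semi-ampleness and so produce the canonical model. Two preliminary observations make this clean. First, $f$ is projective (this is exactly the content of Corollary \ref{contraction-theorem}, and is what makes flipping contractions available in the first place) and is small, so every $\R$-divisor on $X$ is automatically big$/Z$; thus once we reach a model on which $K+B$ is nef$/Z$ the bigness hypothesis of Theorem \ref{lc-basepoint-free} will be satisfied for free. Second, since $R$ is an extremal ray I would reduce to the case of a $\Q$-boundary: choose a $\Q$-boundary $B_0$ with $(X,B_0)$ log canonical and $-(K_X+B_0)$ still $f$-ample, construct the flip for $(X,B_0)$, and then deduce the $\R$-version formally, using $\rho(X/Z)=1$ and the negativity lemma to see that the resulting small modification is simultaneously the flip for $B$. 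The flip is then the relative canonical model $X^+=\Proj_Z\bigoplus_{m\ge 0}f_*\OO_X(\rddown{m(K_X+B_0)})$, so it suffices to prove this sheaf of algebras is finitely generated.

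Next I would take a $\Q$-factorial dlt modification $g\colon(Y,B_Y)\to(X,B_0)$ with $K_Y+B_Y=g^*(K_X+B_0)$, which exists for log canonical $3$-folds in characteristic $p>5$ as a consequence of the klt LMMP. Writing $h=f\circ g\colon Y\to Z$ and using $g_*\OO_Y=\OO_X$ together with the projection formula, for $m$ sufficiently divisible one has $f_*\OO_X(m(K_X+B_0))=h_*\OO_Y(m(K_Y+B_Y))$, since $m(K_Y+B_Y)=g^*(m(K_X+B_0))$ is then Cartier. Hence it is enough to construct the relative log canonical model of the dlt pair $(Y,B_Y)$ over $Z$.

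I would then run a $(K_Y+B_Y)$-MMP over $Z$. Because $K_Y+B_Y=g^*(K_X+B_0)$ and $-(K_X+B_0)$ is $f$-ample, the $h$-contracted curves on which $K_Y+B_Y$ is negative are precisely those mapping to $f$-contracted curves, so this is a special program localised over the zero-dimensional flipping locus of $f$ in $Z$. After the standard perturbation making the dlt pair klt, its divisorial contractions and flips are of klt type and so are supplied by the klt LMMP in characteristic $p>5$, while termination follows from the known termination results for $3$-fold programs over a base. This yields $(Y',B_{Y'})$ with $K_{Y'}+B_{Y'}$ nef$/Z$. Now $(Y',B_{Y'})$ is a projective dlt, hence log canonical, pair, $Y'\to Z$ is a projective birational contraction, and $K_{Y'}+B_{Y'}$ is nef$/Z$ and (being birational) big$/Z$; so Theorem \ref{lc-basepoint-free} gives that $K_{Y'}+B_{Y'}$ is semi-ample$/Z$. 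The associated morphism $Y'\to X^+$ over $Z$ produces $X^+$ with $K_{X^+}+B^+$ ample$/Z$, and over the open set where $f$ is an isomorphism nothing is modified; since $f$ is small this open set contains all codimension-one points of $Z$, so $X^+\to Z$ is again small and $X\bir X^+$ is an isomorphism in codimension one. Thus $X^+$ is the required flip.

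The step I expect to be the main obstacle is running the program over $Z$ in the middle paragraph: organising it as a special MMP localised over the flipping locus, reducing its flips and divisorial contractions to the klt case without circularity (so that only the already-available klt LMMP, and not the flips being constructed, is used), and securing its termination. By contrast the appeal to Theorem \ref{lc-basepoint-free} at the end is the clean input, turning the nef minimal model into the ample canonical model that is the flip.
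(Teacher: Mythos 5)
Your overall strategy matches the paper's: pass to a crepant $\Q$-factorial dlt model $g\colon (Y,B_Y)\to (X,B)$, run a $(K_Y+B_Y)$-MMP over $Z$, apply Theorem \ref{lc-basepoint-free} (bigness$/Z$ being free since $f$ is birational) to get semi-ampleness$/Z$, and take the relative log canonical model $X^+$. But there is a genuine gap at the decisive final step: proving that $X^+\to Z$ is small, i.e.\ that $X\bir X^+$ is an isomorphism in codimension one. Your justification --- ``over the open set where $f$ is an isomorphism nothing is modified; since $f$ is small this open set contains all codimension-one points of $Z$, so $X^+\to Z$ is again small'' --- is not valid: an exceptional divisor of $X^+\to Z$ would map into $f(\exc(f))$, which has codimension at least $2$ in $Z$, so a statement about what happens over the codimension-one points of $Z$ says nothing about it (compare: the blow-up of a point in $\PP^3$ is an isomorphism over an open set containing all codimension-one points, yet is not small). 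The danger is real: the dlt modification $g$ extracts divisors of log discrepancy $0$ lying over log canonical centres of $(X,B)$, including possibly ones over $\exc(f)$; such a divisor need not be contracted by the MMP $Y\bir Y^+$, and one must show it is contracted by the log canonical model morphism $Y^+\to X^+$, rather than descending to a divisor $E_+\subset X^+$ lying over the flipping locus.

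This is exactly what the second half of the paper's proof establishes, and it requires an argument, not just bookkeeping: take a common resolution $\phi\colon W\to X$, $\phi^+\colon W\to X^+$, and set $L=\phi^*(K_X+B)-\phi^{+*}(K_{X^+}+B_{X^+})$. Then $L$ is anti-nef$/Z$, hence effective by the negativity lemma. If a divisor $E_+$ were contracted by $X^+\bir X$, it would have log discrepancy $0$ with respect to both $(X,B)$ (it comes from a $g$-exceptional divisor) and $(X^+,B_{X^+})$ (by crepancy of $Y^+\to X^+$), so $L$ would have coefficient $0$ along its transform $E_W$; but $\phi^{+*}(K_{X^+}+B_{X^+})|_{E_W}$ is big and nef$/Z$ while $\phi^*(K_X+B)$ is anti-nef$/Z$, so $L$ meets a covering family of curves of $E_W$ contracted over $Z$ negatively --- impossible for an effective divisor whose support does not contain $E_W$. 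Two smaller points: your appeal to ``known termination results for $3$-fold programs over a base'' is too vague --- the paper secures termination by observing that the MMP$/Z$ is also an MMP for $K_Y+B_Y+ng^*f^*A$ with $A$ ample on $Z$, which is big, hence numerically equivalent to an effective divisor, so that Theorem \ref{EffTerm} applies; and the reduction to $\Q$-boundaries is unnecessary (Theorem \ref{lc-basepoint-free} and Proposition \ref{dlt} are stated for $\R$-boundaries) and itself hides unproved claims, such as the rationality of the log canonical polytope on the possibly non-$\Q$-factorial $X$ and the ``formal'' transfer of the flip from $B_0$ back to $B$.
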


We also use the ascending chain condition (ACC) for log canonical thresholds to show that any LMMP which begins from an effective pair terminates, as in characteristic zero \cite{birkar_ascending_2007}.

\begin{thm}[Termination for effective pairs]\label{EffTerm}
Let $(X,B)$ be a projective log canonical $3$-fold pair over an algebraically closed field $k$ of characteristic $p>5$, with $\R$-boundary $B$.  Then any sequence of $K_X+B$-flips which are also $M$-flips for some $\R$-Cartier $M\geq0$ terminates.
\end{thm}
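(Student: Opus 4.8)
The plan is to follow the strategy of \cite{birkar_ascending_2007}, turning the effective divisor $M$ into a monotone numerical invariant via log canonical thresholds. Write the sequence as $\phi_i\colon X_i\bir X_{i+1}$ with $(X_0,B_0)=(X,B)$, let $M_i$ denote the birational transform of $M$ on $X_i$, and put
\[
t_i=\lct\bigl(X_i,B_i;M_i\bigr)=\sup\{\,t\ge 0 \ :\ (X_i,B_i+tM_i)\ \text{is log canonical}\,\}.
\]
Since each $\phi_i$ is at once a $(K_{X_i}+B_i)$-flip and an $M_i$-flip, the contracted ray $R_i$ has $(K_{X_i}+B_i)\cdot R_i<0$ and $M_i\cdot R_i<0$, so $(K_{X_i}+B_i+tM_i)\cdot R_i<0$ for all $t\ge 0$ and $\phi_i$ is simultaneously a $(K_{X_i}+B_i+tM_i)$-flip. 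The first step is to prove that $(t_i)$ is non-decreasing. Passing to a common resolution of $\phi_i$ and applying the negativity lemma to $K+B+t_iM$ yields
\[
a\bigl(E;X_{i+1},B_{i+1}+t_iM_{i+1}\bigr)\ \ge\ a\bigl(E;X_i,B_i+t_iM_i\bigr)
\]
for every divisor $E$ over the $X_j$, the inequality being strict exactly when $\cent_{X_i}(E)$ is contained in the flipping locus. As $(X_i,B_i+t_iM_i)$ is log canonical its log discrepancies are all $\ge 0$, hence so are those of $(X_{i+1},B_{i+1}+t_iM_{i+1})$, which is therefore log canonical; thus $t_{i+1}\ge t_i$.

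By the ascending chain condition for log canonical thresholds in the relevant dimension --- applicable because a flip is an isomorphism in codimension one, so the coefficients of every $B_i$ and $M_i$ form the same fixed finite set as those of $B$ and $M$ --- the non-decreasing sequence $(t_i)$ stabilises, say $t_i=t$ for $i\ge N$. Discarding the first $N$ terms, I would work with the log canonical pairs $(X_i,B_i+tM_i)$, for which $M_i$ is saturated by the threshold and the pair carries a log canonical centre; feeding the equality $t_i=t_{i+1}$ back into the strict form of the displayed inequality shows that no log canonical place of $(X_i,B_i+tM_i)$ has its centre contained in the flipping or flipped locus. Passing to a $\Q$-factorial dlt modification and applying special termination --- which in dimension three needs only termination of the minimal model program for surfaces --- the flips become isomorphisms in a neighbourhood of the reduced boundary after finitely many further steps. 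The remaining flipping curves then lie where the pair is klt, and termination there follows, after a terminalisation, from the termination of terminal threefold flips via Shokurov's difficulty function; this yields termination of the original sequence.

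The principal obstacle is that the two imported ingredients must be secured in characteristic $p>5$: the ascending chain condition for log canonical thresholds in the dimension used here, and the finiteness and strict decrease of the difficulty function for terminal threefolds, both of which rest in characteristic zero on tools (notably vanishing theorems) unavailable here. Of these the ACC statement is the most delicate point, and it is precisely to obtain a characteristic-independent reduction to it that the effective hypothesis and the monotone threshold $t_i$ are used. There are also bookkeeping matters to check: that passing to a $\Q$-factorial dlt modification carries the given sequence of flips to a compatible sequence to which special termination applies, and that the real coefficients cause no trouble --- for the latter I would either use the $\R$-coefficient form of the ACC or reduce to $\Q$-coefficients by a small perturbation of $B$ and $M$, verifying that neither $t_i$ nor its monotonicity is disturbed.
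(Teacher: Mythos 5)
Your first half is essentially the paper's strategy and is fine: the threshold $t_i=\lct(X_i,B_i,M_i)$ is non-decreasing along the flips by the negativity lemma, ACC for log canonical thresholds (\cite[1.10]{birkar_p}) lets you truncate so that $t_i=t$ is constant, and after passing to a crepant $\Q$-factorial dlt model (Proposition \ref{dlt}) and applying special termination (Proposition \ref{special-term}) one may assume all remaining flips are disjoint from the reduced boundary. Two smaller inaccuracies along the way: constancy of $t_i$ only rules out lc places of $(X_{i+1},B_{i+1}+tM_{i+1})$ whose centre lies in the \emph{flipped} locus; it does not rule out lc places of $(X_i,B_i+tM_i)$ centred in the flipping locus (the threshold can be computed by a divisor elsewhere), so your stronger claim is unjustified --- though it is also not needed in order to invoke special termination. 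Also, lifting the given sequence to the dlt model requires running auxiliary LMMPs over each $Z_i$, which you cannot assume terminate without circularity; the paper's device is to note that whether or not these auxiliary LMMPs terminate, one still obtains an infinite sequence of dlt flips upstairs, which is all that is needed.

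The genuine gap is your finishing move. After special termination the remaining flips are flips of a klt \emph{pair} whose boundary contains the transforms of $B$, of $tM$, and the exceptional divisors of the dlt model; after terminalisation this is a terminal pair with nonzero boundary, not a terminal variety. Shokurov's difficulty argument obtains strict decrease by exhibiting a divisor of discrepancy exactly $1$ over the generic point of the flipped curve $C^+$; when $C^+$ is contained in the boundary $\Delta^+$ that divisor has discrepancy $1-\mult_{C^+}\Delta^+<1$, the strict decrease fails, and one is thrown back on Kawamata-style log termination --- a genuinely harder theorem, and one not available in characteristic $p>5$, as you yourself concede without resolving. The paper needs no such input, and this is precisely the idea you are missing: once the flips avoid the reduced boundary, delete the coefficient-one components from both the boundary and from $M$, obtaining divisors $B'$, $M'$ such that the \emph{same} flips are still $(K+B'+tM')$-flips and $M'$-flips, while $(Y,B'+tM')$ is now klt, so the new threshold $\lct(Y,B',M')$ is strictly greater than $t$. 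Iterating the whole procedure (dlt model, special termination, delete the reduced part) on the still-infinite sequence produces a strictly increasing chain $t^1<t^2<t^3<\cdots$ of log canonical thresholds of pairs and divisors with coefficients in a fixed finite set, contradicting ACC. In other words, ACC plus special termination alone close the argument, and the effectivity of $M$ is used at every round rather than being discarded after the first truncation; no unconditional klt or terminal termination theorem is ever invoked.
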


In Section \ref{section-cone} we extend the cone theorem to log canonical $3$-folds in characteristic $p>5$.  Note that this gives new information even in the klt case if the variety is not $\Q$-factorial.  

\begin{thm}[Cone theorem]\label{lc-cone}
Let $(X,B)$ be a projective log canonical $3$-fold pair over an algebraically closed field $k$ of characteristic $p>5$, with $\R$-boundary $B$.  Then there exists a countable collection of rational curves $\{C_i\}$ on $X$ such that:
\begin{enumerate}
\item{$\overline{NE}_1(X) = \overline{NE}_1(X) \cap (K_X + B)_{\geq 0} +\sum_i\R_{\geq 0} \cdot [C_i]$.}
\item{$-6 \leq (K_X +B)\cdot C_i < 0$.}
\item{For any ample $\R$-divisor $A$, $(K_X+B+A)\cdot C_i\geq 0$ for all but finitely many $i$.} 
\item{The rays $\{\R_{\geq 0} \cdot [C_i]\}$ do not accumulate in $(K_X+B)_{<0}$.}
\end{enumerate}
\end{thm}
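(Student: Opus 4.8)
The plan is to reduce to the $\Q$-factorial dlt case, where the cone theorem is already available, and then transport the statement across a crepant modification. First I would take a $\Q$-factorial dlt modification $g\colon(Y,B_Y)\to(X,B)$, that is, a projective birational morphism from a $\Q$-factorial dlt pair with $K_Y+B_Y=g^*(K_X+B)$; its existence follows by taking a log resolution and running a suitable MMP over $X$, using the klt LMMP for threefolds in characteristic $p>5$ (\cite{hacon_three_2015}, \cite{birkar_waldron}). I would then apply the cone theorem for $\Q$-factorial dlt threefolds in this characteristic (\cite{birkar_waldron}) to $(Y,B_Y)$, producing a countable family of rational curves $\{C_j'\}$ satisfying the analogues of (1)--(4) on $Y$, with $-6\le(K_Y+B_Y)\cdot C_j'<0$.

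Next I would push forward along $g$. Since $g$ is crepant, the projection formula gives $(K_Y+B_Y)\cdot\gamma=(K_X+B)\cdot g_*\gamma$ for every $1$-cycle $\gamma$; in particular every $g$-contracted curve is $(K_Y+B_Y)$-trivial, so none of the negative curves $C_j'$ is contracted, and each descends to a rational curve $C_i:=g(C_j')$ with $g_*C_j'=m_i C_i$ for an integer $m_i\ge1$. As $g$ is a surjective projective morphism, $g_*$ maps $\overline{NE}_1(Y)$ onto $\overline{NE}_1(X)$, and by the projection formula it sends $\overline{NE}_1(Y)\cap(K_Y+B_Y)_{\ge0}$ into $\overline{NE}_1(X)\cap(K_X+B)_{\ge0}$; applying $g_*$ to the cone decomposition on $Y$ then yields (1). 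For (2), the identity $(K_X+B)\cdot C_i=\tfrac1{m_i}(K_Y+B_Y)\cdot C_j'$ together with $m_i\ge1$ shows that the length bound on $Y$ descends to $-6\le(K_X+B)\cdot C_i<0$.

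For the finiteness statements I would first observe that (3) and (4) are equivalent: normalising the classes $[C_i]$ by the ample divisor $A$ (respectively by any fixed ample divisor) and using compactness of the corresponding slice of $\overline{NE}_1(X)$ shows that each implies the other, the key point being that the resulting limiting ray is forced into $(K_X+B)_{<0}$. I would therefore aim to prove (4) directly. Suppose infinitely many distinct rays $\R_{\ge0}[C_i]$ accumulate at a ray $R\subset(K_X+B)_{<0}$. Choosing lifts $C_j'$ on $Y$, which are distinct since their images are, and normalising by a fixed ample divisor on $Y$, compactness yields a subsequence converging to a nonzero class $v\in\overline{NE}_1(Y)$. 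If $g_*v\ne0$ then $g_*v$ spans $R$, so crepancy gives $(K_Y+B_Y)\cdot v=(K_X+B)\cdot g_*v<0$, and the $C_j'$ accumulate at the ray of $v$ inside $(K_Y+B_Y)_{<0}$, contradicting property (4) on $(Y,B_Y)$.

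I expect the main obstacle to be excluding the degenerate case $g_*v=0$, in which the limit collapses into the contracted face $\overline{NE}_1(Y)\cap\ker g_*=\overline{NE}_1(Y/X)$, cut out by the nef class $g^*A$. By crepancy this face is $(K_Y+B_Y)$-trivial, so the argument above does not apply to it, and since $g^*A$ is only nef it cannot by itself confine these rays; geometrically, the collapse forces the lifts $C_j'$ to lie asymptotically in the exceptional locus $\exc(g)$. To rule it out I would analyse the exceptional divisors of $g$, which are contained in the reduced boundary $\rddown{B_Y}$: restricting by adjunction to these surfaces and using the surface theory underlying Theorem \ref{restrict}, one bounds the extremal curves supported on $\exc(g)$ and thereby prevents infinitely many distinct images from accumulating at $R$. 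Reconciling this boundedness on the exceptional locus with the cone theorem on $Y$ is, I expect, the delicate heart of the argument.
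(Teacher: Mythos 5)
Your overall strategy is the paper's: pass to a crepant $\Q$-factorial dlt model $g\colon (Y,B_Y)\to (X,B)$, invoke the dlt cone theorem of \cite{birkar_waldron}, and push forward. Your treatment of (1) and (2) is correct, and for (1) it is in fact more direct than the paper's: you apply $g_*$ to the Minkowski-sum decomposition on $Y$ and use the projection formula, whereas the paper argues by contradiction with a supporting nef divisor and a convexity lemma about preimages of extremal faces (Lemma \ref{trivial-convex-lemma}). Both routes rest on the same input, namely $g_*\overline{NE}_1(Y)=\overline{NE}_1(X)$, which the paper also asserts.

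The gap is in (3)/(4), and you have located it precisely but not closed it. Nothing in your argument excludes the degenerate case $g_*v=0$; to be fair, the paper's own proof passes over the identical point without comment, since its assertion that the limit ray satisfies ``$f_*R^Y=R$'' is exactly what fails when the limit of the normalised lifts is $g_*$-trivial. However, your proposed repair cannot work: in the degenerate case the lifted curves $C_j'$ need not lie in, or even meet, $\exc(g)$ --- only their \emph{normalised classes} approach the cone $\overline{NE}_1(Y/X)$, which is perfectly compatible with the $C_j'$ being horizontal curves of unbounded $H_Y$-degree sweeping out $Y$ --- so no analysis of curves supported on the exceptional divisors, by adjunction or otherwise, controls them. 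The correct fix is much simpler and takes place entirely on $X$, making the lift irrelevant for (3)/(4): it is the standard Mori-type deduction of discreteness from the length bound. Suppose infinitely many pairwise distinct rays $\R_{\geq 0}[C_i]$ converge to a ray $R$ with $(K_X+B)\cdot R<0$. Fix an ample Cartier divisor $H$ and normalise, $z_i=[C_i]/(H\cdot C_i)\to z$ with $(K_X+B)\cdot z=-c<0$; then for $i\gg 0$ one has $(K_X+B)\cdot C_i\leq -\tfrac{c}{2}H\cdot C_i$, which combined with the bound $(K_X+B)\cdot C_i\geq -6$ from (2) gives $H\cdot C_i\leq 12/c$. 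Curves of bounded $H$-degree on the projective variety $X$ form a bounded family and hence represent only finitely many numerical classes, contradicting the distinctness of infinitely many rays; this proves (4), and the same argument (with $A$ in place of $H$, after pruning the collection so that distinct indices span distinct rays) proves (3). Incorporating this argument would both complete your proposal and repair the elision in the paper's published proof.
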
 

Putting all of our results together allows us to deduce the following:

\begin{cor}[Log minimal models]\label{log-minimal-models}
Let $(X,B)$ be a projective log canonical $3$-fold pair over an algebraically closed field $k$ of characteristic $p>5$, with $\R$-boundary $B$. Suppose there is a projective contraction $X\to Z$, such that there exists $M\geq 0$ with $K_X+B\num M/Z$.  Then there exists a log minimal model $(Y/Z,B_Y)$ for $(X/Z,B)$, such that $Y\bir X$ does not contract divisors.

In fact this log minimal model can be produced by running a terminating LMMP$/Z$ starting from $(X/Z,B)$.

If in addition $K_X+B$ is big$/Z$ then this log minimal model is good.
\end{cor}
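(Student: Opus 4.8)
The plan is to produce $(Y/Z,B_Y)$ by literally running the log minimal model program over $Z$ from $(X/Z,B)$ and checking that it terminates with a model of the stated type, assembling the results proved above. Given a pair $(X_i/Z,B_i)$ in the program with $K_{X_i}+B_i$ not nef$/Z$, the cone theorem (Theorem \ref{lc-cone}) supplies a $(K_{X_i}+B_i)$-negative extremal ray, and the contraction theorem (Corollary \ref{contraction-theorem}) contracts it by a projective contraction $X_i\to T/Z$. First I would rule out the fibre-type case. Writing $M_i$ for the strict transform of $M$, the equivalence $K_{X_i}+B_i\num M_i/Z$ is preserved along the program, so every curve $C$ contracted over $Z$ satisfies $M_i\cdot C=(K_{X_i}+B_i)\cdot C$. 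A fibre-type contraction has its fibres covered by curves $C$ with $(K_{X_i}+B_i)\cdot C<0$, including general curves not contained in $\Supp M_i$, which would force $M_i\cdot C\geq 0$ since $M_i\geq 0$; this contradiction shows no fibre-type contraction occurs. Hence each step is a divisorial or a flipping contraction, and in the latter case the flip exists by Corollary \ref{lcflips}. The pairs stay projective log canonical over $Z$, and since every divisorial contraction points in the direction $X\bir Y$ while flips are isomorphisms in codimension one, the inverse map $Y\bir X$ contracts no divisors.

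For termination I would combine two observations. Each divisorial contraction strictly drops a nonnegative integer invariant (the number of prime divisors, or the relative Picard number), so only finitely many divisorial contractions can occur; beyond the last one the program is an infinite sequence of flips. Each such flip is a $(K+B)$-flip, and I claim it is also an $M$-flip: the flipping curves $C$ are contracted over $Z$ with $(K_{X_i}+B_i)\cdot C<0$, so $M_i\cdot C<0$ by the preserved numerical equivalence, while $M_i\geq 0$ because the strict transform of an effective divisor is effective. Thus this tail is a sequence of $(K+B)$-flips that are simultaneously $M$-flips, and it terminates by Theorem \ref{EffTerm}. Therefore the entire program terminates, producing a log minimal model $(Y/Z,B_Y)$ with $K_Y+B_Y$ nef$/Z$ and with $Y\bir X$ contracting no divisors, as required.

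Finally, suppose $K_X+B$ is big$/Z$. Bigness over $Z$ is preserved under the birational contraction $X\bir Y$ (which contracts but never extracts divisors), so $K_Y+B_Y$ is nef$/Z$ and big$/Z$. As $(Y,B_Y)$ is a projective log canonical $3$-fold pair over $k$ equipped with the projective contraction $Y\to Z$, Theorem \ref{lc-basepoint-free} applies and yields that $K_Y+B_Y$ is semi-ample$/Z$; that is, the log minimal model is good.

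Granting the cited machinery, the only delicate point is the bookkeeping behind termination: confirming that the preserved equivalence $K_{X_i}+B_i\num M_i/Z$ makes every flip an $M$-flip with $M_i\geq 0$, so that Theorem \ref{EffTerm} can be invoked, and that only finitely many divisorial contractions intervene. Everything else is a direct assembly of the cone and contraction theorems, the existence of flips, and Theorem \ref{lc-basepoint-free}.
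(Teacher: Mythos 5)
Your proposal is correct and follows essentially the same route as the paper: the paper's proof is exactly this assembly of Theorem \ref{lc-cone}, Corollary \ref{contraction-theorem}, Corollary \ref{lcflips}, Theorem \ref{EffTerm} and Theorem \ref{lc-basepoint-free}, stated in a few lines. The bookkeeping you supply --- excluding fibre-type contractions using $M\geq 0$, bounding the number of divisorial contractions by the (relative) Picard number rather than the ill-defined ``number of prime divisors'', and observing that the residual sequence of flips consists of $M$-flips so that Theorem \ref{EffTerm} applies --- is precisely what the paper leaves implicit.
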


A stronger version of Theorem \ref{lc-basepoint-free} over $\overline{\F}_p$ was proven by Martinelli, Nakamura and Witaszek in \cite{martinelli_nakamura_witaszek}, using different methods.  A version of our Theorem \ref{lc-mori-contraction} over $\overline{\F}_p$ has been obtained independently in \cite{nakamura_witaszek} by the latter two authors using methods similar to our own proof of Theorem \ref{lc-mori-contraction} in Section \ref{section-GLMM}.  A similar method was also used for $4$-folds in characteristic zero in \cite{hashizume_abundance}.

The layout of our paper is as follows.  We first prove the cone theorem (Theorem \ref{lc-cone}) in Section \ref{section-cone}.  We then prove the termination result (Theorem \ref{EffTerm}) in Section \ref{sec:termination}.  Next we come to our main results, proving Theorem \ref{restrict} in Section \ref{sec-restrict} on the way to Theorem \ref{lc-basepoint-free} and Theorem \ref{lc-mori-contraction} in Section \ref{section-GLMM}.  Finally we prove the remaining results, Corollaries \ref{lcflips}, \ref{contraction-theorem} and \ref{log-minimal-models}, in Section \ref{LMMP}.

{\textbf{{Acknowledgements.}}}  The author would like to thank his advisor Caucher Birkar for his support and advice.  He would also like to thank the referee, whose comments have greatly improved the article.  The author is funded by the Engineering and Physical Sciences Research Council (UK).

\section{Preliminaries}

\subsection{Semi-ampleness results in positive characteristic}

This section summarises some special features of positive characteristic which we will use later.  The first is a criterion for semi-ampleness due to Keel.   

\begin{defn}[{\cite[0.1]{keel_basepoint_1999}}]
Let $X$ be a scheme proper over a field, with nef line bundle $L$.  The \emph{exceptional locus} $\E(L)$ of $L$ is defined to be the Zariski closure of the union of all subvarieties $V$ of $X$ such that $L|_V$ is not big, given the reduced scheme structure.
\end{defn}

\begin{prop}[{\cite[1.9]{keel_basepoint_1999}}]\label{KeelE}
Let $X$ be a scheme projective over a field of positive characteristic, with a nef line bundle $L$.  Then $L$ is semi-ample if and only if $L|_{\E(L)}$ is semi-ample. 
\end{prop}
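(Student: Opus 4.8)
The plan is to reconstruct Keel's argument. The forward (``only if'') implication is immediate and characteristic-free: if $L$ is semi-ample then some power $L^m$ is globally generated and defines a morphism $\phi\colon X\to\PP^N$ with $L^m=\phi^*\OO(1)$, and the sections defining $\phi$ restrict to generating sections of $L^m|_{\E(L)}$, so $L|_{\E(L)}$ is semi-ample. All the content is in the converse, so assume $L|_{\E(L)}$ is semi-ample and write $D:=\E(L)$; I want to produce a morphism $f\colon X\to Y$ and an ample bundle $A$ on $Y$ with $L^m\simeq f^*A$, which is equivalent to semi-ampleness.

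First I would reduce to the case $k=\overline{\F}_p$, which is the essential one. Spreading $X$, $L$, and the given semi-ample structure on $D$ out over a finitely generated $\F_p$-subalgebra of $k$ and specializing to a closed point (whose residue field is finite), one reduces semi-ampleness of $L$ to semi-ampleness on a geometric fiber by the standard openness and lifting properties of global generation in flat families. The point of the reduction is to gain access to the single most important positive-characteristic input: on a projective scheme over $\overline{\F}_p$ every numerically trivial line bundle is torsion, because $\Pic^0$ has only finitely many points over each finite subfield. In particular a nef bundle that is numerically trivial is automatically semi-ample, since a power of it is $\OO_X$.

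Next I would use the semi-ample structure on $D$ to fix a contraction $g\colon D\to W$ with $(L|_D)^a=g^*A_W$ for some ample $A_W$, contracting exactly the curves in $D$ on which $L$ has degree $0$. Off $D$ the bundle $L$ is big on every subvariety, so there it behaves like an ample bundle; the aim is to glue this ``ample'' behaviour away from $D$ to the contraction $g$ on $D$, realizing $L$ as the pullback of an ample bundle under a single morphism $f\colon X\to Y$ extending $g$.

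The hard part, and the only genuinely positive-characteristic step, is to propagate the linear system from $D$ to $X$. Lifting a section of a power of $L|_D$ to $X$ is obstructed by a class in $H^1\!\big(X,\,I_D\otimes L^{an}\big)$, which need not vanish. The resolution is to pull back by a high power of Frobenius: this replaces $I_D$ by its Frobenius power $I_D^{[p^e]}$ (absorbing the nilpotents in the infinitesimal thickenings of $D$ once $p^e$ exceeds their order) and replaces $L$ by $L^{p^e}$, and since $L$ is nef and big away from $D$ a Fujita-type, or Frobenius-amplitude, vanishing theorem then forces the obstruction group to vanish for $e\gg0$ once $n\gg0$. Thus after passing to a $p$-power of $L$ the sections pulled back from $W$ lift, so a power of $L$ is globally generated. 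I expect this vanishing-after-Frobenius step, together with the finite-field torsion fact above, to carry the whole argument; the remaining bookkeeping is to check that $Y=\Proj \bigoplus_n H^0(X,L^n)$ is of finite type — finite generation of the section ring follows from the same vanishing, reducing it to the already-known finite generation on $D$ — and that $L^m=f^*\OO_Y(1)$ with $\OO_Y(1)$ ample, whence $L$ is semi-ample.
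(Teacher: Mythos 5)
First, a remark on the comparison itself: the paper offers no proof of this proposition — it is quoted directly from Keel — so your attempt has to be measured against Keel's original argument. Your identification of the engine is correct and genuinely matches Keel: the ``only if'' direction is trivial, and for the converse one raises sections of a power of $L|_{\E(L)}$ to $p^e$-th powers so that they extend canonically over the Frobenius thickening of $\E(L)$, then kills the lifting obstruction in $H^1$ by Fujita-type vanishing using positivity off $\E(L)$. However, your opening reduction to $k=\Fpbar$ is a genuine error, and not a harmless one, since you lean on it to import the torsion property of $\Pic^0$ over $\Fpbar$. Semi-ampleness cannot be detected on closed fibers of a spreading-out. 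Concretely: let $C$ be the nodal cubic over $\Fpbar$, so $\Pic^0(C)\cong\mathbb{G}_m$, and let $\mathcal{L}$ be a Poincar\'e bundle on $C\times\mathbb{G}_m$. Every closed fiber $\mathcal{L}_a$, $a\in\Fpbar^{\times}$, is torsion (because $\Fpbar^{\times}$ is a torsion group), hence semi-ample; yet the generic fiber corresponds to the non-torsion element $t\in\mathbb{G}_m\bigl(\Fpbar(t)\bigr)$ and is nef but \emph{not} semi-ample. So ``all closed fibers semi-ample'' does not imply the generic fiber is semi-ample, and there is no uniformity in the required power of $L$ across closed fibers (the torsion orders of the $\mathcal{L}_a$ are unbounded); nefness also fails to pass to closed fibers in general. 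The deeper point is that ``numerically trivial implies torsion over $\Fpbar$'' is not an input to this theorem at all: it is the input to Keel's genuinely $\Fpbar$-specific results (his Theorem 0.3, and the Martinelli--Nakamura--Witaszek strengthening mentioned in the introduction). Proposition \ref{KeelE} holds over \emph{every} field of characteristic $p>0$ precisely because semi-ampleness of $L|_{\E(L)}$ is a hypothesis, and Keel's proof never leaves the given field.

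The second gap is that your vanishing step needs more than ``$L$ is big on every subvariety not contained in $\E(L)$.'' To make the obstruction group computable you need an effective \emph{Cartier} divisor $D$ with $\E(L)\subseteq\Supp D$ and $A:=L^{n}\otimes\OO_X(-D)$ ample; then $I_D^{[p^e]}=\OO_X(-p^eD)$ and the obstruction group becomes $H^1\bigl(X, A^{p^e}\otimes (L^{m-n})^{p^e}\bigr)$, which Fujita vanishing kills for $p^e\gg0$ uniformly in the nef twist. Such a decomposition exists only when $L$ is big on each irreducible component of $X$ (any component on which $L$ is not big lies inside $\E(L)$, where the hypothesis already gives everything). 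So the argument requires a d\'evissage you omit: pass to irreducible components and induct on dimension; upgrade the hypothesis ``$L|_{\E(L)}$ semi-ample'' to ``$L|_{D}$ semi-ample'' via that induction, using $\E(L|_D)\subseteq\E(L)$ and Proposition \ref{univhom} to ignore nilpotents; and finally glue the contractions built on the several components of a reducible $X$ — this gluing is exactly where Keel needs the universal-homeomorphism lemma, because the maps constructed on two components need only agree on their overlap after composing with a power of Frobenius. With the $\Fpbar$ reduction deleted and this d\'evissage added, your sketch becomes Keel's proof.
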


A universal homeomorphism is a morphism of varieties which remains a homeomorphism after arbitrary base change.  For finite morphisms, this is equivalent to a simple condition:

\begin{prop}[{\cite[I.3.7-8]{EGA1},\cite[49]{kollar_variants}}]\label{prop:univ-homeo}
For a finite morphism $f:Y\to X$, the following are equivalent:
\begin{enumerate}
\item{$f$ is a universal homeomorphism.}
\item{$f$ is surjective and injective on geometric points.}
\end{enumerate}
\end{prop}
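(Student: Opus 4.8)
The plan is to reduce the equivalence to the standard structure theory of radicial (universally injective) morphisms, exploiting the fact that finiteness already supplies every topological property of a universal homeomorphism except bijectivity. A finite morphism is proper, hence universally closed, and finiteness is preserved by arbitrary base change; so for every $T\to X$ the base change $f_T\colon Y_T\to T$ is again finite, in particular closed and continuous. Since a continuous closed bijection is a homeomorphism, $f$ is a universal homeomorphism if and only if $f_T$ is bijective on underlying topological spaces for every $T\to X$, i.e. if and only if $f$ is universally bijective. Thus the whole statement reduces to showing that, for a finite morphism, universal bijectivity is equivalent to surjectivity together with injectivity on geometric points.

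For the surjectivity half, I would first note that surjectivity on geometric points is equivalent to topological surjectivity: given $x\in X$ one takes the geometric point $\Spec\overline{k(x)}\to X$, and a lift to $Y$ exists precisely when the fibre $Y_x$ is nonempty. Topological surjectivity is then automatically universal: for $x'\in T$ lying over $x\in X$ the fibre of $f_T$ over $x'$ is $Y_x\times_{k(x)}\Spec k(x')$, which is nonempty because $Y_x\ne\emptyset$ and the tensor product of a nonzero $k(x)$-algebra with the field $k(x')$ is nonzero. Hence surjectivity on geometric points yields universal surjectivity.

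The main point is the injective half, namely the identification of ``injective on geometric points'' with universal injectivity. This is the classical characterisation of radicial morphisms: for any $f$, injectivity of $Y(\Omega)\to X(\Omega)$ for all algebraically closed $\Omega$ is equivalent to injectivity of $Y(K)\to X(K)$ for all fields $K$ (embed $K$ in its algebraic closure and use that a point over $K$ is a topological point together with a field embedding of its residue field), which in turn is equivalent to surjectivity of the diagonal $\Delta_f\colon Y\to Y\times_X Y$, and this last condition is manifestly stable under base change. Surjectivity of the diagonal forces each $f_T$ to be injective on points, since two points of $Y_T$ with the same image in $T$ would otherwise produce a point of $Y_T\times_T Y_T$ lying off the diagonal. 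I expect this equivalence---passing from the pointwise geometric-fibre condition to injectivity after an arbitrary, possibly non-reduced, base change---to be the only genuinely delicate step, and the place where one really invokes \cite{EGA1}.

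With these two ingredients the argument closes up. If $f$ is a universal homeomorphism then it is bijective and remains so after every base change, so it is surjective and universally injective, whence surjective and injective on geometric points. Conversely, if $f$ is finite, surjective, and injective on geometric points, then it is both universally surjective and universally injective, hence universally bijective; combined with the fact that every $f_T$ is finite and therefore closed and continuous, this makes each $f_T$ a homeomorphism, so $f$ is a universal homeomorphism.
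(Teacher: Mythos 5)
Your proposal is correct. Note, however, that the paper offers no proof of this proposition at all: it is quoted as a known preliminary, with references to EGA~I (3.7--8) and Koll\'ar's \emph{Variants of normality}, so there is no in-paper argument to compare against. Your argument is the standard derivation of exactly that cited fact, and all the steps hold up: (i) finiteness gives universal closedness and is stable under base change, so a continuous closed bijection being a homeomorphism reduces the whole statement to universal bijectivity; (ii) surjectivity on geometric points is equivalent to topological surjectivity, and the latter is universal because for $x'\in T$ over $x\in X$ the fibre of $f_T$ is $Y_x\times_{\Spec k(x)}\Spec k(x')$, nonempty by faithful flatness of the field extension $k(x)\to k(x')$; (iii) injectivity on geometric points descends to injectivity on $K$-points for every field $K$ (using that $\Spec \overline{K}\to \Spec K$ is an epimorphism), which is the radicial condition, equivalent to surjectivity of the diagonal $\Delta_f\colon Y\to Y\times_X Y$ and hence stable under base change. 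The one place you lean on the literature --- the equivalence of the geometric-point condition with universal injectivity --- is precisely the content of the EGA reference the paper cites, and you correctly identify it as the only delicate point; you even sketch its proof via the diagonal, so your writeup is essentially self-contained. In short: a correct, complete proof of a statement the paper merely cites.
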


Any finite universal homeomorphism can be composed with some other finite universal homeomorphism to give a power of the Frobenius morphism. This allows us to move information on line bundles in the reverse direction to usual:

\begin{prop}[{\cite[1.4]{keel_basepoint_1999}}]\label{univhom}
Let $f:X\to Y$ be a finite universal homeomorphism between schemes of finite type over a field of positive characteristic.  Let $L$ be a line bundle on $Y$.  Then $L$ is semi-ample if and only if $f^*L$ is semi-ample.
\end{prop}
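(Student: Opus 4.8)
The plan is to prove the two implications separately; the forward one is formal and the reverse one is where the positive characteristic genuinely enters. For the forward implication, suppose $L$ is semi-ample and choose $m>0$ with $L^{\otimes m}$ globally generated. Global generation is preserved under pullback along an arbitrary morphism of schemes, and $f^*(L^{\otimes m})=(f^*L)^{\otimes m}$, so $(f^*L)^{\otimes m}$ is globally generated and $f^*L$ is semi-ample. This direction uses nothing about the characteristic or about $f$ being a universal homeomorphism, and in particular it will also apply to the auxiliary morphism $g$ constructed below.

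For the reverse implication the key is the structural fact flagged above, so I would first prove that a finite universal homeomorphism can be completed to a power of Frobenius: there exist $e\geq 1$ and a morphism $g:Y\to X$ with $f\circ g=\Fr_Y^{\,e}$, the $e$-fold absolute Frobenius of $Y$. Working affine-locally, write $Y=\Spec A$ and $X=\Spec B$ with $A\to B$ finite. By Proposition \ref{prop:univ-homeo} the morphism $f$ is finite, surjective and radicial, which in characteristic $p$ forces the extension to be purely inseparable: every $b\in B$ satisfies $b^{p^n}\in A$ for some $n$. Since $B$ is a finitely generated $A$-algebra, I can pick a single $e$ for which the $p^e$-th power of each algebra generator lies in $A$; because Frobenius is additive and multiplicative, it follows that $b^{p^e}\in A$ for every $b\in B$. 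The corestriction $b\mapsto b^{p^e}$ then defines a ring map $g^\#:B\to A$ with $g^\#\circ f^\#=\Fr_A^{\,e}$ and $f^\#\circ g^\#=\Fr_B^{\,e}$, and these glue to a global $g$ with $f\circ g=\Fr_Y^{\,e}$.

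Granting this, the reverse implication is immediate: assuming $f^*L$ is semi-ample, the forward implication applied to the morphism $g$ shows that $g^*(f^*L)$ is semi-ample, and
\[
g^*(f^*L)=(f\circ g)^*L=(\Fr_Y^{\,e})^*L\cong L^{\otimes p^e}.
\]
Since a line bundle is semi-ample precisely when some positive tensor power is, semi-ampleness of $L^{\otimes p^e}$ yields semi-ampleness of $L$. The only real obstacle is the completion lemma, and inside it the bookkeeping forced by nilpotents: for a universal homeomorphism the ring map $A\to B$ need not be injective, since its kernel is a nilpotent ideal $I$, so the corestriction of $\Fr^{\,e}$ is canonically defined only as a map $B\to A/I$. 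I would handle this by choosing $e$ large enough that $\Fr^{\,e}$ annihilates the nilpotents of $A$ and $B$, and then descend semi-ampleness across the nilpotent thickening separately — for instance by filtering by the powers of the nilradical — thereby reducing the whole statement to the case where $Y$ and $X$ are reduced, for which $A\to B$ is injective and the construction of $g$ goes through without obstruction.
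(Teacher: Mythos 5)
Your overall route---complete $f$ to a power of Frobenius by producing $g:Y\to X$ with $f\circ g=\Fr_Y^{\,e}$, then push semi-ampleness around the triangle using $(\Fr_Y^{\,e})^*L\cong L^{\otimes p^e}$---is exactly the mechanism this paper has in mind: it gives no proof of its own, citing Keel, but the sentence immediately preceding the statement ("any finite universal homeomorphism can be composed with some other finite universal homeomorphism to give a power of the Frobenius morphism") is precisely your completion lemma, and your forward implication and final deduction from that lemma are correct. The genuine gap is inside the completion lemma, at the one step that carries all the content: you assert that finiteness, surjectivity and radiciality "force" every $b\in B$ to satisfy $b^{p^n}\in\mathrm{im}(A)$. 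Radiciality (equivalently, injectivity on geometric points, Proposition \ref{prop:univ-homeo}) only gives a residue-field statement: injectivity on points together with pure inseparability of $\kappa(\mathfrak p)\subset\kappa(\mathfrak q)$. That pointwise statement does not formally upgrade to the algebra-level one. For instance, for the normalization $A=k[t^2,t^3]\subset B=k[t]$ of the cusp---a finite universal homeomorphism---every residue field extension is trivial, yet $t\notin A$; from residue fields alone you only learn that $b^{p^n}$ lies in the total ring of fractions $K(A)$, which for non-normal $A$ is strictly weaker than lying in $A$. Indeed, in characteristic $0$ the identical hypotheses hold for the cusp and the corresponding conclusion (a right inverse of $f$ up to "$\Fr^0=\mathrm{id}$") is false, so any correct argument must use universal injectivity in its full strength and genuinely use $p$: e.g.\ surjectivity of the diagonal makes $b\otimes 1-1\otimes b$ nilpotent in $B\otimes_AB$, whence $b^q\otimes 1=1\otimes b^q$ for a large $p$-power $q$, and one must then still descend from this equalizer condition to $\mathrm{im}(A)$ (by Noetherian induction via the conductor, starting from the Artinian case, or via the fact that universal homeomorphisms induce isomorphisms on perfections). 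This is a true and citable theorem---it is the substance of Keel's 1.4 and of the Koll\'ar reference behind Proposition \ref{prop:univ-homeo}---but as written you assume it rather than prove it.

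Your treatment of nilpotents is also both heavier than necessary and, as formulated, incomplete. Descending semi-ampleness across the thickening "by filtering by powers of the nilradical" is not automatic: done as deformation theory it meets nonvanishing $H^1$ obstructions to lifting sections, and the repair is again a Frobenius trick (lift generating sections of $L^{\otimes m}|_{X_{\mathrm{red}}}$ locally and raise them to the $p^e$-th power, so the local lifts glue because their differences lie in the nilpotent ideal), so this detour needs as much proof as what it replaces. A cleaner fix avoids the reduction entirely: since $I=\ker(A\to B)$ is nilpotent, for $p^{e'}$ exceeding the nilpotency order the Frobenius $\Fr_A^{\,e'}$ kills $I$ and hence factors canonically as $A\to A/I\xrightarrow{\ \phi\ }A$; composing your canonically defined $\bar g:B\to A/I$ (sending $b$ to the class of a preimage of $b^{p^e}$) with $\phi$ gives an honest ring map $g^{\#}:B\to A$ satisfying $g^{\#}\circ f^{\#}=\Fr_A^{\,e+e'}$, i.e.\ $f\circ g=\Fr_Y^{\,e+e'}$, and the canonicity of this recipe is exactly what makes the affine-local constructions glue to a global $g$.
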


\subsection{Demi-normality}

We describe a generalisation of normality particularly suited to use in the LMMP.

\begin{defn}
A scheme satisfies Serre's $S_2$ condition at $x\in X$ if $$\depth_x\OO_{X,x}\geq\min(2,\dim_x \OO_{X,x}).$$
\end{defn}

\begin{prop}[{\cite[5.10-11]{EGA4_2}, \cite[Section 2]{alexeev_limits}}]\label{S2-fication}
If $X$ is a quasi-projective, reduced, equidimensional variety, then the set $U$ where $X$ satisfies $S_2$ is open and $\codim(X-U,X)\geq 2$.  There exists a birational morphism $\phi:Y\to X$, called the $S_2$-fication, such that the following hold:
\begin{enumerate}
\item{For $x\in X$, $X$ satisfies $S_2$ at $x$ if and only if $\phi$ is an isomorphism at $x$.}
\item{$Y$ satisfies $S_2$ at all points.}
\item{$\phi$ is finite and the normalisation of $X$ factors through $\phi$.}
\end{enumerate}
\end{prop}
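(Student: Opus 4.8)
The plan is to dispatch the two assertions separately: first the openness of the $S_2$-locus $U$ together with the codimension bound, and then the construction of $\phi$ as a ``pushforward across codimension two.''

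I would begin with the local statement. Since $X$ is reduced it satisfies $R_0$ and $S_1$ by Serre's criterion; in particular it is automatically $S_2$ at every point of codimension $\leq 1$. Indeed, at a codimension $0$ point the local ring is a field, so $\depth=\dim=0=\min(2,0)$, and at a codimension $1$ point $\dim\OO_{X,x}=1$ while $S_1$ already forces $\depth_x\OO_{X,x}\geq 1=\min(2,1)$. Hence the non-$S_2$ locus $Z=X\setminus U$ is contained in the codimension $\geq 2$ part of $X$. For the openness of $U$ I would invoke that a quasi-projective variety over a field is excellent, so its $S_2$-locus is open (this is the point where \cite{EGA4_2} 5.10--11 is really used). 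Combining, $U$ is open and $\codim(Z,X)\geq 2$.

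For the construction, let $\nu:\bar X\to X$ be the normalization, which is finite and an isomorphism over the dense open normal locus; reducedness gives an inclusion $\OO_X\hookrightarrow\nu_*\OO_{\bar X}$ of coherent $\OO_X$-algebras. Writing $j:U\hookrightarrow X$, I set $\mathcal A:=j_*\OO_U$, and the first task is to prove $\mathcal A$ coherent. Because $\bar X$ is normal it is $S_2$, hence so is the finite $\OO_X$-module $\nu_*\OO_{\bar X}$; an $S_2$ module is unchanged by restriction-then-pushforward across the codimension $\geq 2$ set $Z$, so $j_*\bigl(\nu_*\OO_{\bar X}|_U\bigr)=\nu_*\OO_{\bar X}$. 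Since $\OO_X\subseteq\mathcal A\subseteq\nu_*\OO_{\bar X}$ and the outer terms are coherent over the Noetherian scheme $X$, the sheaf $\mathcal A$ is a coherent $\OO_X$-algebra. I then define $\phi:Y=\Spec_X\mathcal A\to X$, which is finite by coherence and birational because the displayed inclusions are equalities at the generic points; moreover $\mathcal A\subseteq\nu_*\OO_{\bar X}$ factors the normalization through $\phi$, which is exactly property (3).

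The heart of the argument is that $Y$ is $S_2$ at all points. Here I would use the standard criterion that a coherent sheaf is $S_2$ precisely when it is $S_1$ and, for every open $V$ and every closed $W\subseteq V$ of codimension $\geq 2$, the restriction $\mathcal A(V)\to\mathcal A(V\setminus W)$ is bijective. For $\mathcal A=j_*\OO_U$ this reduces, upon replacing $W$ by $W\cap U$ (still of codimension $\geq 2$ in $U$), to the same bijectivity for $\OO_U$ on $U$, which holds because $U$ is $S_2$ by its very definition. Property (1) is then formal: if $X$ is $S_2$ at $x$ then $x\in U$, where $\mathcal A|_U=\OO_U$, so $\phi$ is an isomorphism at $x$; conversely, if $\mathcal A_x=\OO_{X,x}$ then this local ring is $S_2$, since $Y$ is, whence $x\in U$. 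The two steps I expect to require the most care are the openness of the $S_2$-locus, which genuinely needs excellence, and the clean identification of the Serre condition $S_2$ with the codimension-two extension property; once the finiteness of the normalization is in hand, everything else is essentially formal.
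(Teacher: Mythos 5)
The paper never proves this proposition --- it is quoted verbatim from the references (EGA IV 5.10--11 and Alexeev), so there is no internal proof to compare against. Your argument is correct and is precisely the standard construction those sources use: take $U$ to be the $S_2$-locus (open by excellence, of complement codimension $\geq 2$ because reducedness already gives $S_2$ at points of codimension $\leq 1$), set $Y=\Spec_X j_*\OO_U$, and get coherence of $j_*\OO_U$ by sandwiching it between $\OO_X$ and $\nu_*\OO_{\bar X}$, the latter being unchanged by pushforward across codimension two since $\bar X$ is normal. The one step you compress is the passage from ``$j_*\OO_U$ is an $S_2$ coherent $\OO_X$-module'' to ``$Y$ is $S_2$ as a scheme'': this uses that depth of $\OO_{Y,y}$ over itself equals its depth over $\OO_{X,\phi(y)}$ (depth is insensitive to finite ring maps) together with $\dim\OO_{Y,y}\leq\dim\OO_{X,\phi(y)}$, both standard facts, so this is a presentational gap rather than a mathematical one.
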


\begin{rem}
This partial normalisation is also called by various other names in the literature, for example the $S_2$-ization, $Z^{[2]}$-closure and saturation in codimension $2$.
\end{rem}

The best generalisation of normality for the purposes of the LMMP is demi-normality:

\begin{defn}[{\cite{kollar_singularities_2013}}]
A scheme is \emph{demi-normal} if it is $S_2$ and has at worst nodes in codimension $1$.
\end{defn}

For example, in characteristic zero, the reduced part of the boundary of a dlt pair is demi-normal \cite[17.5]{flips_and_abundance}.

\subsection{Singularities of the LMMP}
Here we define the singularity classes most commonly encountered in the LMMP.  For more information see \cite{kollar_singularities_2013}. This work is concerned with extending results known for klt $3$-fold pairs to log canonical $3$-fold pairs.
\begin{defn}\label{def:singularities}
Let $k$ be an algebraically closed field.  A \emph{pair} $(X,B)$ over $k$ consists of a normal variety $X$ over $k$ and an effective $\R$-divisor $B$, called the \emph{boundary}, such that $K_X+B$ is $\R$-Cartier.  Given a birational morphism $\phi:Y\to X$ from another normal variety, we can define $B_Y$ to be the unique $\R$-divisor satisfying $\phi_*B_Y=B$ and $$K_Y+B_Y\lin_{\R}\phi^*(K_X+B).$$ 
For a prime Weil divisor $D$ on such a birational model $Y$, we define the \emph{log discrepancy} of $D$ with respect to $(X, B)$ to be $a(D,X,B):=1-b$ where $b$ is the coefficient of $D$ in $B_Y$.

We say  the pair $(X,B)$ is:
\begin{itemize}
\item{\emph{Kawamata log terminal} (klt) if all $D$ on all birational models of $X$ have $a(D,X,B)>0$.}
\item{\emph{Log canonical} if all $D$ on all birational models of $X$ have $a(D,X,B)\geq 0$.}
\end{itemize}

A \emph{log canonical centre} of $(X,B)$ is the image on $X$ of a divisor of log discrepancy zero with respect to $(X,B)$.  We denote the union of all log canonical centres of a log canonical pair $(X,B)$ by $\mathrm{LCS}(X,B)$.

\begin{itemize}
\item{We say $(X,B)$ is \emph{divisorially log terminal} (dlt) if it is log canonical and there exists a closed subvariety $V\subset X$ such that $(X,B)$ is log smooth outside $V$ and no log canonical centre is contained in $V$.}
\end{itemize}
\end{defn}

If we are in a situation where log resolutions exist, such as for $3$-folds in positive characteristic, to determine if a pair is log canonical or klt it is enough to check discrepancies of just the irreducible Weil divisors on a fixed log resolution of $(X,B)$.

We will need to work with pairs on non-normal surfaces after adjunction, so we need to be able to talk about singularities in the non-normal case.

\begin{defn}[{\cite[5.10]{kollar_singularities_2013}}]
We say a pair $(X,B)$ consisting of a (possibly non-normal) variety $X$ and effective $\Q$-divisor $B$ is \emph{semi-log canonical} or \emph{slc} if:
\begin{itemize}
\item $X$ is a demi-normal scheme with normalisation $\pi:\tilde{X}\to X$.
\item The support of $B$ does not contain any component of the conductor $D$.
\item $K_X+B$ is $\Q$-Cartier.
\item $(\tilde{X},\tilde{D}+\tilde{B})$ is log canonical, where $\tilde{D}$ is the conductor on $\tilde{X}$ and  $\tilde{B}$ is the birational transform of $B$.
\end{itemize}
\end{defn}

The following result was largely proven in \cite{hacon_three_2015} and \cite{kollar_singularities_2013}, and stated in this form as \cite[2.2]{das_hacon}.

\begin{prop}\label{niceintersection}
Let $(X,B)$ be a $\Q$-factorial dlt $3$-fold pair over an algebraically closed field of characteristic $p>5$, with $B = \sum_i D_i +B'$ where $\rddown{B}=\sum_iD_i$.  Then the following hold:
\begin{itemize}
\item{The $s$-codimensional log canonical centres of $(X,B)$ are exactly the irreducible components of the various intersections $D_{i_1}\cap...\cap D_{i_s}$.}
\item{If $i_1,...,i_s$ are distinct, each irreducible component of $D_{i_1}\cap...\cap D_{i_s}$ is normal and of pure codimension $s$.}
\end{itemize}
\end{prop}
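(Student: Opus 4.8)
The plan is to reduce everything to the log smooth locus, where the assertions are the standard description of the strata of a simple normal crossing pair, and then to globalise via divisorial adjunction together with the positive-characteristic normality of the reduced boundary of a dlt pair. First I would fix the closed subset $V\subset X$ supplied by the definition of dlt, so that $(X,B)$ is log smooth on $U:=X\setminus V$ while no log canonical centre of $(X,B)$ is contained in $V$. On $U$ the divisors $D_i|_U$ are smooth and meet transversally, so for distinct $i_1,\dots,i_s$ the scheme $D_{i_1}\cap\dots\cap D_{i_s}\cap U$ is either empty or smooth of pure codimension $s$, and its irreducible components are exactly the $s$-codimensional log canonical centres of $(U,B|_U)$. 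Because every log canonical centre of $(X,B)$ meets $U$ and is the closure of its trace there, this already yields the converse direction: each $s$-codimensional log canonical centre of $(X,B)$ is the closure of a component of some $D_{i_1}\cap\dots\cap D_{i_s}\cap U$, hence a component of $D_{i_1}\cap\dots\cap D_{i_s}$ of codimension exactly $s$.

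It remains to prove that, conversely, every component of such an intersection is a normal log canonical centre of pure codimension $s$. I would do this by induction on $s$ (equivalently on dimension) using divisorial adjunction. The base case $s=1$ is the normality of each component $D_i$ of $\rddown{B}$. Granting it, adjunction equips $(D_i,B_{D_i})$ with the structure of a dlt pair on the normal surface $D_i$, whose reduced boundary $\rddown{B_{D_i}}$ has as its components precisely the irreducible components of the traces $D_j\cap D_i$ for $j\neq i$. Since any intersection $D_{i_1}\cap\dots\cap D_{i_s}$ is contained in $D_{i_1}$, each of its components is a component of the corresponding intersection of reduced boundary components of $(D_{i_1},B_{D_{i_1}})$; applying the inductive hypothesis to this lower-dimensional dlt pair shows it is a normal log canonical centre of codimension $s-1$ in $D_{i_1}$, hence of codimension $s$ in $X$, and that every component is accounted for. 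Combined with the first paragraph, this establishes both bullet points.

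The main obstacle is the base case: the normality of the reduced boundary components of a dlt $3$-fold pair, and the fact that adjunction to such a component again yields a dlt pair. In characteristic zero these follow from Kawamata--Viehweg vanishing via the connectedness theorem, which is unavailable here; instead one must invoke the $3$-fold adjunction and normality results of \cite{hacon_three_2015} (compare \cite{kollar_singularities_2013}), valid in characteristic $p>5$. A secondary technical point requiring care is to verify that the reduced part of the different $\rddown{B_{D_i}}$ consists exactly of the traces $D_j\cap D_i$, so that the induction tracks the correct strata; this can be read off from the local description of the different at a log smooth point, using once more that the relevant centres meet $U$.
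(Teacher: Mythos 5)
The paper never proves this proposition: it is quoted verbatim as \cite[2.2]{das_hacon}, with the underlying work attributed to \cite{hacon_three_2015} and \cite{kollar_singularities_2013}. Your sketch is essentially a reconstruction of that literature argument (normality of the components of $\rddown{B}$ in characteristic $p>5$, dlt adjunction to those components, induction on codimension anchored at the log smooth locus), so the architecture is the right one, and your first paragraph — every $s$-codimensional log canonical centre of $(X,B)$ meets $U$, hence is the closure of a stratum of the simple normal crossing pair $(U,B|_U)$ and so a component of some $D_{i_1}\cap\dots\cap D_{i_s}$ — is complete as written.

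The converse direction, however, has a genuine gap, and it sits exactly at the point that makes this statement a theorem rather than a formality. Your induction shows that a component $W$ of $D_{i_1}\cap\dots\cap D_{i_s}$ is a normal log canonical centre \emph{of the adjunction pair} $(D_{i_1},B_{D_{i_1}})$, of the right codimension; but the proposition requires $W$ to be a log canonical centre of $(X,B)$, and you never pass back from the surface pair to the ambient pair. That passage — log canonical centres of $(D_{i_1},\mathrm{Diff})$ are log canonical centres of $(X,B)$ — is a form of precise adjunction which in characteristic $p$ cannot be taken for granted; it is the substance of \cite{das_hacon} (alternatively it can be extracted from the existence of log resolutions for $3$-folds together with explicit computations on simple normal crossing models, but that argument has to be made). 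Moreover, your induction needs this conclusion internally, not just to state the final result: to see that $D_{i_1}\cap D_{i_2}\cap D_{i_3}$ has pure codimension $3$ you must rule out that the traces $D_{i_2}\cap D_{i_1}$ and $D_{i_3}\cap D_{i_1}$ share a curve $C$, and the natural way to do so is to know that $C$ is a log canonical centre of $(X,B)$, hence meets $U$, where three snc divisors cannot contain a common curve. Relatedly, your proposed justification of $\rddown{B_{D_i}}=\bigcup_{j\neq i}\left(D_j\cap D_i\right)$ is circular as stated: you invoke that ``the relevant centres meet $U$,'' but a coefficient-one component $C$ of the different is not yet known to be a log canonical centre of $(X,B)$, so the dlt hypothesis does not yet apply to it. The repair is classical codimension-one adjunction at the generic point of $C$ — a surface-germ statement valid for excellent surfaces in any characteristic, cf.\ the surface theory in \cite{kollar_singularities_2013} — which shows that coefficient one in the different forces a log discrepancy-zero divisor over $X$ with centre $C$; only after that does dlt-ness place $C$ in the log smooth locus and allow your snc computation.
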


\subsection{3-fold LMMP}\label{sub:3-fold-LMMP}

Given a $\Q$-factorial dlt $3$-fold pair $(X,B)$ over an algebraically closed field $k$ of characteristic $p>5$ and a projective contraction $X\to Z$ we may run a $K_X+B$-MMP$/Z$ using the results of \cite{birkar_p} and \cite{birkar_waldron}.  In particular, we can locate extremal rays using the cone theorem \cite[1.1]{birkar_waldron}, contract extremal rays using \cite[1.3]{birkar_waldron} and construct flips using \cite[1.1]{birkar_p}.
We also use the following results:

\begin{prop}[{\cite[5.5]{birkar_p}}]\label{special-term}
Let $(X,B)$ be a projective $\Q$-factorial dlt $3$-fold pair over an algebraically closed field $k$ of characteristic $p>5$ with $\R$-boundary $B$.  Suppose we are given a sequence of $K_X+B$-flips.  Then after finitely many flips, each remaining flip is an isomorphism near $\rddown{B}$.
\end{prop}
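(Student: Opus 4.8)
The plan is to carry out the standard special termination argument of Shokurov and Fujino in our $3$-fold setting, using Proposition \ref{niceintersection} in place of the characteristic zero theory of log canonical centres. Write $S=\rddown{B}$, and on each model $X_i$ of the sequence write $S_i=\rddown{B_i}$ for the birational transform of $S$. Every $K_X+B$-flip is an isomorphism in codimension one, so the divisorial components of $S$ persist throughout the sequence, and a flip $X_i\bir X_{i+1}$ is an isomorphism near $S_i$ exactly when its flipping locus, equivalently its flipped locus, is disjoint from $S_i$. It therefore suffices to show that after finitely many flips the flipping locus becomes disjoint from $S$. By Proposition \ref{niceintersection}, the log canonical centres of each $(X_i,B_i)$ are precisely the irreducible components of the intersections $D_{j_1}\cap\cdots\cap D_{j_s}$, each normal of pure codimension $s$; these strata, of dimensions $2$, $1$ and $0$, are the objects I would induct on.

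Two tools drive the argument. The first is monotonicity of log discrepancies under a flip: for any divisor $E$ over $X_i=X_{i+1}$ one has $a(E,X_i,B_i)\le a(E,X_{i+1},B_{i+1})$, with strict inequality exactly when the centre of $E$ meets the flipping locus. The second is divisorial adjunction along a normal stratum $V$, which produces a dlt pair $(V,B_V)$ of dimension $\le 2$ with $K_V+B_V=(K_X+B)|_V$ and whose reduced boundary is cut out by the strata properly contained in $V$. Together these show that a flip whose flipping locus contains a curve lying in $V$ restricts to a birational map $V_i\bir V_{i+1}$ contracting $(K_{V_i}+B_{V_i})$-negative curves, that is, to a step of the minimal model program for the lower-dimensional pair $(V,B_V)$.

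The main line of the induction runs on the dimension of the strata. A minimal log canonical centre $V$ contains no smaller stratum, so $\rddown{B_V}=0$, the pair $(V,B_V)$ is klt, and the induced sequence on $V$ is a klt minimal model program of dimension $\le 2$, which terminates; hence after finitely many flips no flipping curve lies in a minimal centre. For a non-minimal stratum $V$, once the flipping locus already contains no curve lying in the strata properly inside $V$, the induced operations on $V$ take place away from its reduced boundary $\rddown{B_V}$ and are therefore klt minimal model program steps on the surface $V$; these terminate because the Picard number of a surface drops at each divisorial contraction and surfaces admit no flips. Proceeding from the minimal strata up to the surface components $D_1,\dots,D_r$ shows that, after finitely many flips, no flipping curve is contained in $S$.

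The hard part is the remaining possibility that a flipping curve meets $S$ only at isolated points, transverse to a stratum rather than contained in one. Such a curve is invisible to the restricted maps on the strata --- indeed it leaves each normal stratum abstractly unchanged, since a birational map between normal surfaces that is an isomorphism in codimension one is already an isomorphism --- yet it still violates the conclusion, because the flip contracts part of the curve in every neighborhood of the point. To exclude it I would introduce a difficulty-type invariant counting divisorial valuations whose centre is contained in $S$ and whose log discrepancy lies below a suitable threshold; by the monotonicity above this invariant is non-increasing along the sequence and strictly decreases whenever such a transverse flipping curve occurs. The principal obstacle is proving that this invariant is finite, so that its strict decrease can happen only finitely often; here the dlt structure near $S$ and the normality of the centres furnished by Proposition \ref{niceintersection} must be used in an essential way. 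By contrast, the lower-dimensional termination on which the induction rests is merely termination of the surface minimal model program, which holds in every characteristic, so positive characteristic adds no further difficulty once the structural results quoted in the excerpt are available.
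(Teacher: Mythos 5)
This statement is not proved in the paper at all: Proposition \ref{special-term} is quoted verbatim from \cite[5.5]{birkar_p}, whose proof is the Shokurov--Fujino special termination argument adapted to characteristic $p>5$ (normality of dlt strata plus adjunction with DCC coefficients plus surface termination). So your overall strategy is the right one, and you correctly identify that Proposition \ref{niceintersection} substitutes for the characteristic-zero structure theory. But as a proof your text has genuine gaps, one of which you concede yourself. First, the monotonicity lemma is misstated: $a(E,X_i,B_i)\le a(E,X_{i+1},B_{i+1})$ is strict precisely when the centre of $E$ on $X_i$ is \emph{contained in} the flipping locus (equivalently its centre on $X_{i+1}$ is contained in the flipped locus), not when it merely \emph{meets} it; by negativity the difference of pullbacks on a common resolution is effective with support equal to the preimage of the flipping locus, and a valuation whose centre is a curve meeting that locus at a point picks up no multiplicity. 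Your transverse case survives only because valuations centred \emph{at} the intersection point are contained in the flipping locus, but the invariant you propose needs this distinction made correctly. Second, your reduction of the in-stratum case to ``klt surface MMP steps, which terminate since Picard number drops and surfaces admit no flips'' is wrong: the induced map $V_i\bir V_{i+1}$ on a surface stratum need not be a birational contraction. A flipped curve can land \emph{inside} $V_{i+1}$, so the induced map may extract divisors (and a curve flipped out of $V_i$ may leave it); for the same reason your later claim that a transverse flipping curve leaves each stratum ``abstractly unchanged'' is false --- the restricted map need not be an isomorphism in codimension one, so the small-surface-map rigidity you invoke does not apply.

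Third, and decisively, you explicitly leave unproven the finiteness and controlled decrease of your difficulty-type invariant --- but that \emph{is} the analytic content of special termination, not a residual technicality. The standard resolution (Fujino, and \cite[5.5]{birkar_p}) goes: (i) the number of log canonical centres is finite and non-increasing, and strictly drops whenever a centre is contained in a flipping locus, so after finitely many flips no stratum is contained in any flipping locus (this, not an ``MMP on the centre,'' handles the $0$-dimensional strata, and for $1$-dimensional strata one uses that $\deg(K_{C_i}+\Theta_i)$ is non-increasing with coefficients of $\Theta_i$ in a DCC set); (ii) adjunction to the normal surface strata yields boundaries $\Theta_i$ with coefficients in a fixed DCC set, a dlt surface pair carries only finitely many valuations of log discrepancy $<1$, any divisor extracted into $V_{i+1}$ satisfies $a(F,V_i,\Theta_i)<a(F,V_{i+1},\Theta_{i+1})\le 1$ so the bookkeeping stays within this finite collection, and DCC bounds the number of strict drops of the weighted difficulty; (iii) the transverse case is then excluded because a flip not an isomorphism at a point $P$ of $V_i$ forces, via the effective difference divisor restricted to the strict transform of $V_i$, a strict increase for some valuation over $P$ with $a(F,V_i,\Theta_i)<1$. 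None of this needs characteristic zero --- your closing remark is correct --- but steps (ii)--(iii) are exactly the part your proposal defers, so what you have is an accurate plan rather than a proof.
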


\begin{prop}[{\cite[1.6]{birkar_p}}]\label{dlt}
Let $(X,B)$ be a log canonical $3$-fold pair over an algebraically closed field $k$ of characteristic $p>5$ with $\R$-boundary $B$.  Then $(X,B)$ has a crepant $\Q$-factorial dlt model.  In particular this morphism is small over the klt locus of $(X,B)$.
\end{prop}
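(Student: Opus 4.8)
The plan is to extract the dlt model by the standard crepant-blowup construction: run an appropriate MMP over $X$ on a log resolution and use the negativity lemma to see that the output is crepant. First I would choose a log resolution $\pi\colon W\to X$ of $(X,B)$, which exists for $3$-folds in characteristic $p>5$. Writing $\tilde B$ for the birational transform of $B$ and $E_1,\dots,E_r$ for the $\pi$-exceptional prime divisors, set $\Delta_W=\tilde B+\sum_j E_j$, so that $(W,\Delta_W)$ is log smooth and in particular $\Q$-factorial dlt. Comparing with the log pullback, whose coefficient along $E_j$ is $1-a(E_j,X,B)$, gives
$$K_W+\Delta_W=\pi^*(K_X+B)+F,\qquad F=\sum_j a(E_j,X,B)\,E_j.$$
Because $(X,B)$ is log canonical, every $a(E_j,X,B)\geq0$, so $F\geq0$ and $F$ is $\pi$-exceptional.

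Next I would run a $(K_W+\Delta_W)$-MMP over $X$ using the $3$-fold machinery recalled in Section \ref{sub:3-fold-LMMP}, noting that $\Q$-factoriality and the dlt property are preserved at every step. Over $X$ we have $K_W+\Delta_W\num F$, so each step contracts an $F$-negative extremal ray; since $F\geq0$, the curves generating such a ray lie in $\Supp(F)$, and hence every divisor contracted by the MMP is a component of $F$ (no log canonical place, having log discrepancy $0$, can be contracted). The output is a $\Q$-factorial dlt pair $(Y,B_Y)$ together with $\phi\colon Y\to X$ on which $K_Y+B_Y$ is nef$/X$; writing $F_Y$ for the pushforward of $F$ we get $K_Y+B_Y\num F_Y$ over $X$ with $F_Y\geq0$ and $\phi$-exceptional. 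Applying the negativity lemma to the $\phi$-nef, $\phi$-exceptional divisor $F_Y$ forces $F_Y\leq0$, hence $F_Y=0$. Thus $K_Y+B_Y=\phi^*(K_X+B)$ is crepant and every component of $F$ — that is, every exceptional divisor of positive log discrepancy — has been contracted, while the log canonical places survive as prime divisors on $Y$. This $(Y,B_Y)$ is the desired crepant $\Q$-factorial dlt model.

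For the final assertion, over the klt locus $U\subseteq X$ all log discrepancies are strictly positive, so there are no log canonical places lying over $U$; therefore $\phi$ contracts no divisor over $U$, and being a birational morphism of normal varieties that is an isomorphism in codimension one over $U$, it is small over $U$.

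The main obstacle is termination of this MMP in positive characteristic. After at most $r$ divisorial contractions the relative Picard number over $X$ reaches its minimum and only flips remain, so what must be controlled is the termination of these flips. Here I would appeal to the special termination and flip-termination results for $3$-folds available from \cite{birkar_p} and \cite{birkar_waldron} (running the MMP with scaling of an ample divisor if convenient), which is precisely the positive-characteristic input replacing the characteristic-zero termination used in the analogous construction.
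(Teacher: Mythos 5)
The paper itself contains no proof of this statement: Proposition \ref{dlt} is imported verbatim from \cite[1.6]{birkar_p}, so the comparison can only be against the cited source. Your construction (log resolution $\pi\colon W\to X$, boundary $\Delta_W=\tilde B+\sum_j E_j$, MMP over $X$, negativity lemma to force $F_Y=0$, and lc places surviving to give crepancy) is the standard dlt-model argument and is essentially the proof given in \cite{birkar_p}; the crepancy step and the smallness over the klt locus are correct as written.

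The one place you should tighten is termination, and the fix is already implicit in what you wrote. You observed that every contracted extremal ray is $F$-negative, so every flipping locus lies in $\Supp F$; by construction each $E_j$ has coefficient $1$ in $\Delta_W$, hence $\Supp F\subseteq\rddown{\Delta_W}$, and this inclusion persists under the MMP since the birational transforms of the $E_j$ keep coefficient $1$. Therefore special termination alone --- Proposition \ref{special-term}, i.e.\ \cite[5.5]{birkar_p} --- finishes the argument: after finitely many flips every remaining flip is an isomorphism near $\rddown{\Delta}$, yet every flip of this MMP has flipping locus inside $\rddown{\Delta}$, so the sequence must stop. Your vaguer appeal to ``special termination and flip-termination results \ldots with scaling if convenient'' should be replaced by exactly this one-line deduction, because no stronger termination input is actually available or usable here: general termination of dlt 3-fold flips in characteristic $p$ is not known, the klt termination-with-scaling of \cite{birkar_waldron} does not apply to the dlt pair $(W,\Delta_W)$, and you must not invoke Theorem \ref{EffTerm} of this paper, since its proof uses Proposition \ref{dlt} and that would be circular.
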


We will follow the convention that, by definition, if $(Y,B_Y)$ is a log minimal model of $(X,B)$ then $Y\bir X$ does not contract divisors.  Note that this is stronger than the definition used in \cite{birkar_p}, but the difference only occurs when working on non-klt pairs.

\subsection{Polytopes of boundary divisors}

We will use the following result from \cite{birkar_waldron}, proven in characteristic zero in \cite{birkar_on_existence_of_lmm_2_2011}.

\begin{comment}
Quote from \cite{birkar_waldron}, summarised in statement:

Now let $X$ be a $\Q$-factorial projective klt variety of dimension $3$ over $k$ of characteristic $p>5$. 
Let ${V}$ be a finite-dimensional rational affine subspace of the space of Weil $\R$-divisors on $X$. 
As mentioned in the introduction,
$$
\mathcal{L}=\{\Delta \in V \mid (X,\Delta) ~~\mbox{is lc}\}
$$
is a rational polytope in $V$. 
\end{comment}

\begin{prop}[{\cite[3.8]{birkar_waldron}, \cite[3.2]{birkar_on_existence_of_lmm_2_2011}}]\label{polytope}
Let $X$ be a projective $\Q$-factorial klt variety over $k$ and $V$ be a finite-dimensional rational affine subspace of the space of $\R$-divisors on $X$.  For an $\R$-divisor $D$, if $D=\sum_i d_iD_i$ with $D_i$ distinct and prime, define $||D||=\max\{|d_i|\}$.  Finally define $\mathcal{L}\subset V$ by 
$$
\mathcal{L}=\{\Delta \in V \mid (X,\Delta) ~~\mbox{is log canonical}\}.
$$
$\mathcal{L}$ is a polytope with rational vertices.  Fix $B\in \mathcal{L}$. Then  
there are real numbers $\alpha,\delta>0$, depending only on $(X,B)$ and $V$, such that: 
\begin{enumerate}

\item If $\Delta\in \mathcal{L}$, $||\Delta-B||<\delta$ and  $(K_X+\Delta)\cdot R\le 0$ for an extremal ray $R$,
then $(K_X+B)\cdot R\le 0$.

\item  Let $\{R_t\}_{t\in T}$ be a family of extremal rays of $\overline{NE}(X)$. Then the set 
$$ 
\mathcal{N}_T=\{\Delta \in \mathcal{L} \mid (K_X+\Delta)\cdot R_t\ge 0 ~~\mbox{for any $t\in T$}\}
$$
is a rational polytope.

\item Assume $K_X+B$ is nef, $\Delta\in \mathcal{L}$ satisfies $||\Delta-B||<\delta$, and that $X_i\bir X_{i+1}/Z_i$ is a sequence of $K_X+\Delta$-flips which are $K_X+B$-trivial (where $X=X_1$).  Let $B_i$ (respectively $\Delta_i$) be the birational transform of $B$ (respectively $\Delta$) on $X_i$.  Then if
$(K_{X_i}+\Delta_i)\cdot R\le 0$ for an extremal ray $R$ on some $X_i$, then $(K_{X_i}+B_i)\cdot R= 0$. 
\end{enumerate}
\end{prop}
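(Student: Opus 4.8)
The plan is to treat the four assertions in turn, building each on the previous, with the cone theorem for klt $3$-folds in characteristic $p>5$ (\cite{birkar_waldron}) and rationality as the two workhorses.

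First I would show $\mathcal{L}$ is a rational polytope. Fix a log resolution $f\colon W\to X$ of the pair $(X,\sum_j \Sigma_j)$, where $\Sigma_j$ runs over the prime components appearing in the divisors of $V$; such a resolution exists for $3$-folds in characteristic $p>5$. For each prime divisor $E$ on $W$ the log discrepancy $a(E,X,\Delta)$ is an affine function of $\Delta\in V$ with rational coefficients, since $K_W$, $f^*$ and the pullback of $K_X+\Delta$ are all defined over $\Q$. As noted above, $(X,\Delta)$ is log canonical precisely when $a(E,X,\Delta)\ge 0$ for the finitely many $E$ on $W$; together with the effectivity conditions $\Delta\ge 0$ these are finitely many rational affine inequalities. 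Since any $\Delta\in\mathcal{L}$ has all coefficients in $[0,1]$, the region they cut out is bounded, so $\mathcal{L}$ is a rational polytope.

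The heart of the argument is a finiteness statement for extremal rays, which then yields (2) and (1). Consider the affine-linear map $\phi\colon V\to N^1(X)_\R$, $\Delta\mapsto [K_X+\Delta]$, which is defined over $\Q$; its image $P=\phi(\mathcal{L})$ is a compact rational polytope, and each condition $(K_X+\Delta)\cdot R_t\ge 0$ depends only on $\phi(\Delta)$. I would prove that only finitely many of the rays $\{R_t\}$ contribute non-redundant constraints on $P$. The point is that a constraint $(K_X+\Delta)\cdot R_t=0$ can bound $\mathcal{N}_T$ only along a face meeting the locus where $K_X+\Delta$ is $R_t$-trivial or negative; by the cone theorem every such $R_t$ is spanned by a curve $C_t$ with $-2\dim X\le (K_X+\Delta)\cdot C_t$ for $\Delta\in\mathcal{L}$, and the compactness of $\mathcal{L}$ together with the local discreteness of $K_X+\Delta$-negative rays forces the set of supporting functionals that actually meet $P$ to be finite. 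Granting this, $\mathcal{N}_T$ is the preimage under $\phi$ of the intersection of $P$ with finitely many rational half-spaces, hence a rational polytope; this proves (2).

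For (1), I would apply the same finiteness to the single class $b=[K_X+B]$. Only finitely many extremal rays $R$ are \emph{visible} near $b$, and for the finitely many walls $\{x:x\cdot R=0\}$ with $b\cdot R>0$ there is a positive gap; choosing $\delta$ (and correspondingly $\alpha$) smaller than this gap, measured back in $V$ via $\phi$, guarantees that no $\Delta$ with $\|\Delta-B\|<\delta$ can have $(K_X+\Delta)\cdot R\le 0$ while $(K_X+B)\cdot R>0$. Finally, (3) follows by combining (1) with the preservation of nefness: since $K_X+B$ is nef and each flip in the sequence is $K_X+B$-trivial, $K_{X_i}+B_i$ remains nef on every $X_i$, so $(K_{X_i}+B_i)\cdot R\ge 0$ for all rays $R$; applying (1) on $X_i$—valid with the same $\delta$ because $\|\Delta_i-B_i\|=\|\Delta-B\|$ and the flips preserve the relevant numerical data—gives $(K_{X_i}+B_i)\cdot R\le 0$, whence equality. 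The main obstacle is the finiteness lemma for extremal rays underlying (2) and, for (3), verifying that the single constant $\delta$ propagates unchanged along the whole sequence of $K_X+B$-trivial flips; both rest on extracting genuine discreteness from the cone theorem's length bound rather than from mere continuity of the intersection pairing.
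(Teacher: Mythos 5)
First, a point of context: the paper itself gives no proof of this proposition --- it is imported from \cite[3.8]{birkar_waldron} (characteristic $p$) and \cite[3.2]{birkar_on_existence_of_lmm_2_2011} (characteristic zero) --- so your attempt can only be compared with those proofs. Your first step, that $\mathcal{L}$ is a rational polytope (one log resolution, log discrepancies affine in $\Delta$ with rational coefficients, boundedness because coefficients lie in $[0,1]$), is correct and standard. The serious gap is the finiteness lemma on which you rest both (2) and (1). You justify it by ``local discreteness of $K_X+\Delta$-negative rays'', but the cone theorem forbids accumulation of extremal rays only inside the open region $(K_X+\Delta)_{<0}$; it permits accumulation towards the hyperplanes $(K_X+\Delta)^{\perp}$, and those hyperplanes are exactly the walls of $\mathcal{N}_T$. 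Likewise, your claim for (1) that ``only finitely many extremal rays $R$ are visible near $b$'' is not true in general: there can be infinitely many extremal rays on which $K_X+B$ is positive (the cone theorem controls only the negative side), and nothing you quote prevents their walls from approaching $b$. Ruling that out along directions inside $\mathcal{L}$ is precisely the content of (1), not an available input to it; indeed the uniform gap is what the otherwise-unused constant $\alpha$ in the statement refers to. The known proofs run in the opposite order and use different tools: given $R$ with $(K_X+\Delta)\cdot R\le 0<(K_X+B)\cdot R$, one extends the segment from $B$ through $\Delta$ to its exit point $\Delta'\in\partial\mathcal{L}$, notes $(K_X+\Delta')\cdot R<0$, generates $R$ by a curve $C$ with $(K_X+\Delta')\cdot C\ge -6$ (length bound), and obtains $0<(K_X+B)\cdot C\le 6\|\Delta-B\|/(d-\|\Delta-B\|)$, where $d>0$ is the distance from $B$ to the facets of $\mathcal{L}$ not containing $B$; one then needs such intersection numbers bounded away from $0$, which for rational $B$ is the Cartier index and for irrational $B$ requires an induction on $\dim\mathcal{L}$ using its rational vertices. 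Statement (2) is then deduced from (1), again with an induction on dimension. The exit-point trick, the length bound and the rationality/induction step are all absent from your sketch, so neither (1) nor (2) is actually proved.

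For (3), your skeleton --- nefness of $K_{X_i}+B_i$ gives $(K_{X_i}+B_i)\cdot R\ge 0$, then (1) gives $\le 0$ --- is the right one, and the preservation of nefness under $K_X+B$-trivial flips is fine. But the step ``(1) applies on $X_i$ with the same $\delta$ because the flips preserve the relevant numerical data'' is exactly what must be proved and is not automatic: $\delta$ was produced from data of $(X,B)$ --- its cone of curves, Cartier indices, and the geometry of $\mathcal{L}$ --- all of which change under a flip. What is preserved for free is only the norm $\|\Delta_i-B_i\|=\|\Delta-B\|$, because flips are isomorphisms in codimension one. To transfer (1) one must also verify that $(X_i,\Delta_i)$, $(X_i,B_i)$, and the pairs along the whole segment through $B$ and $\Delta$ out to the exit point remain log canonical on $X_i$ (negativity lemma, using that the flips are $K_X+\Delta$-negative and $K_X+B$-trivial), so that the exit-point argument can be re-run on $X_i$ with constants controlled by the original ones. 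That transfer is where the work in the cited proof of (3) lies, and your proposal assumes it rather than proving it.
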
 

\subsection{The LMMP with scaling}

In this subsection we describe a special LMMP, called the LMMP with scaling.  For the purpose of describing the process we will assume that all necessary ingredients exist.   
\begin{defn}\label{def:LMMP-scaling}
Let $(X,B)$ be a log canonical pair and $A\geq 0$ an $\R$-Cartier divisor on $X$.  Suppose also that there is $t_0>0$ such that $(X,B+t_0A)$ is log canonical and $K_X+B+t_0A$ is nef. We describe how to run a $K_X+B$-\emph{MMP with scaling of }$A$.  

Let $\lambda_0=\inf\{t:K_X+B+tA \mathrm{\ is\ nef}\}$, so that $\lambda_0\leq t_0$.  Suppose we can find a $K_X+B$-negative extremal ray $R_0$ which satisfies $(K_X+B+\lambda_0 A)\cdot R_0=0$.  This is the first ray we contract in our LMMP. If the contraction is a Mori fibre contraction we stop, otherwise let $X_1$ be the result of the divisorial contraction or flip.  $K_{X_1}+B_{X_1}+\lambda_0 A_{X_1}$ is also nef, where $B_{X_1}$ and $A_{X_1}$ denote the birational transforms on $X_1$ of $B$ and $A$ respectively.  We define $\lambda_1=\inf\{t:K_{X_1}+B_{X_1}+tA_{X_1} \mathrm{\ is\ nef}\}$.  The next step in our LMMP is chosen to be a $K_{X_1}+B_{X_1}$-negative extremal ray $R_1$ which is $K_{X_1}+B_{X_1}+\lambda_1A_{X_1}$-trivial.  So long as we can always find the appropriate extremal rays, contractions and flips, we can continue this process indefinitely or until the LMMP terminates.
\end{defn}

\begin{lem}\label{lem:LMMP-scaling-num-trivial}
Let $(X,B)$ be a projective $\Q$-factorial log canonical pair with $K_X+B$ nef, and let $E$ be an effective $\R$-Cartier divisor satisfying $E\leq B$ and such that $(X,B-\epsilon E)$ is klt for $\epsilon\leq 1$.  Assume Theorem \ref{lc-cone}, and that all contractions and flips that we need exist.

Suppose that we wish to obtain a log minimal model or Mori fibre space for $(X,B-\epsilon E)$ for some sufficiently small $\epsilon$, where we are free to shrink $\epsilon$.  Then if we attempt to do so by running a $K_X+B-\epsilon E$-MMP with scaling of $E$ we may assume that every extremal ray contracted is $K_X+B$-trivial.

In addition, such an LMMP is also a $K_X+B-\epsilon'E$-MMP with scaling of $E$ for all $\epsilon'<\epsilon$, so if the LMMP terminates it produces a log minimal model or Mori fibre space for $(X,B-\epsilon'E)$ for any $0<\epsilon'\leq\epsilon$.
\end{lem}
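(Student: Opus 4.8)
The plan is to fix the constant $\delta$ produced by Proposition \ref{polytope}, choose $\epsilon$ small relative to it, and then show by induction along the steps of the program that $K+B$ stays nef and every contracted ray is $K+B$-trivial. First I would record the setup. Since $E\le B$ and $\epsilon\le 1$ we have $0\le B-E\le B-\epsilon E$, so $B-\epsilon E$ is a boundary; the klt hypothesis then forces $(X,0)$ to be klt, so together with the hypotheses $X$ is a projective $\Q$-factorial klt variety and Proposition \ref{polytope} applies. Take $V$ to be the rational affine subspace spanned by the components of $B$ and $E$, so both $B$ and $B-\epsilon E$ lie in the polytope $\mathcal L$, and let $\alpha,\delta>0$ be the constants attached to the base point $B$, which is log canonical with $K+B$ nef. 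Shrinking $\epsilon$ (as permitted) I assume $\epsilon\|E\|<\delta$. With $t_0=\epsilon$ we have $(X,(B-\epsilon E)+t_0 E)=(X,B)$ log canonical and $K+B$ nef, so the $K+B-\epsilon E$-MMP with scaling of $E$ in the sense of Definition \ref{def:LMMP-scaling} is set up.

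For the inductive claim, suppose the first $i$ steps are $K+B$-trivial, so $K_{X_i}+B_{X_i}$ is nef and hence the scaling threshold $\lambda_i=\inf\{t:K_{X_i}+B_{X_i}+(t-\epsilon)E_{X_i}\text{ is nef}\}$ satisfies $\lambda_i\le\epsilon$. The ray $R_i$ contracted at step $i$ is $K_{X_i}+B_{X_i}-\epsilon E_{X_i}$-negative, and $\|(B_{X_i}-\epsilon E_{X_i})-B_{X_i}\|=\epsilon\|E_{X_i}\|<\delta$. Proposition \ref{polytope} then forces $(K_{X_i}+B_{X_i})\cdot R_i=0$: for $i=0$ this is part (1), and for $i>0$ it is part (3) applied to the preceding $K+B$-trivial flips, which is exactly what lets me keep using the single constant $\delta$ inherited from $X_0$ rather than a constant depending on $X_i$. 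Subtracting, $-\epsilon(E_{X_i}\cdot R_i)=(K_{X_i}+B_{X_i}-\epsilon E_{X_i})\cdot R_i<0$ gives $E_{X_i}\cdot R_i>0$, and since $(\lambda_i-\epsilon)(E_{X_i}\cdot R_i)=(K_{X_i}+B_{X_i}+(\lambda_i-\epsilon)E_{X_i})\cdot R_i=0$ we conclude $\lambda_i=\epsilon$. As a $K+B$-trivial flip or divisorial contraction preserves nefness of $K+B$, the pair $K_{X_{i+1}}+B_{X_{i+1}}$ is again nef and the induction continues; this proves the first assertion.

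The main obstacle is the uniformity of $\delta$ across the whole program. For the (possibly infinitely many) flips this is precisely what Proposition \ref{polytope}(3) supplies, but a divisorial contraction changes the numerical class group, so part (3) does not literally apply through it. Since a divisorial contraction strictly decreases the Picard number $\rho$, at most $\rho(X)$ of them occur, so I would organise the argument as a secondary induction on $\rho(X)$: treat each maximal run of flips by part (3), verify each divisorial contraction by a fresh application of Proposition \ref{polytope}(1) on the relevant intermediate model (which is again $\Q$-factorial klt with $K+B$ nef), and shrink $\epsilon$ to meet the resulting finitely many constraints. The delicate bookkeeping here is that these intermediate models themselves depend on $\epsilon$; I expect this to be the genuinely technical point, handled by noting that the flip runs are $K+B$-trivial for every sufficiently small $\epsilon$ and resetting the base point of Proposition \ref{polytope} after each of the boundedly many divisorial contractions.

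Finally, for the second assertion fix $0<\epsilon'<\epsilon$. At each step $R_i$ is $K+B$-trivial with $E_{X_i}\cdot R_i>0$, so $(K_{X_i}+B_{X_i}-\epsilon' E_{X_i})\cdot R_i=-\epsilon'(E_{X_i}\cdot R_i)<0$, and by the same computation as above the nef threshold for $B-\epsilon'E$ at step $i$ is again $\epsilon'$ with $R_i$ trivial for $K_{X_i}+B_{X_i}$. Hence the identical sequence of contractions and flips is a valid $K+B-\epsilon' E$-MMP with scaling of $E$, the case $\epsilon'=\epsilon$ being the original run. In particular, if the sequence terminates then its final model is a log minimal model for $(X,B-\epsilon'E)$ (when the last step makes $K+B-\epsilon'E$ nef) or carries a Mori fibre structure, for every $0<\epsilon'\le\epsilon$.
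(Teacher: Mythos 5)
Your computations are correct and the lemma does follow along your lines, but your route is genuinely different from the paper's, and the point you flag as ``the genuinely technical point'' is precisely the difficulty the paper's argument is engineered to avoid. You fix $\epsilon$ once so that $\epsilon\|E\|<\delta$ for the constant $\delta$ of Proposition \ref{polytope} based at $(X,B)$, propagate through flips with part (3), and are then forced to confront the fact that a divisorial contraction changes the model, so a fresh $\delta$ is needed on a model that itself depends on $\epsilon$. The paper never compares $\epsilon$ with any $\delta$. At each step it asks whether the scaling threshold $\lambda_i$ equals $\epsilon$ or is smaller. If $\lambda_i<\epsilon$, it stops: $K_{X_i}+B_{X_i}-(\epsilon-\lambda_i)E_{X_i}$ is nef, so after replacing $\epsilon$ by $\epsilon-\lambda_i$ (allowed, and done only once, at the very end) the current model already is the desired log minimal model. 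If $\lambda_i=\epsilon$, it proves existence of a $K+B$-trivial, $K+B-\epsilon E$-negative ray by contradiction: otherwise Theorem \ref{lc-cone} produces extremal rays $R_j$ with $(K_{X_i}+B_{X_i}-\delta_j E_{X_i})\cdot R_j<0$ and $(K_{X_i}+B_{X_i})\cdot R_j>0$ for some sequence $\delta_j\to 0$, and since it is the perturbation $\delta_j$, not $\epsilon$, that is sent to zero, this violates Proposition \ref{polytope}(1) applied on the current model with whatever constant that model happens to have. Consequently no uniformity of $\delta$ across models is ever needed, part (3) is never invoked, and divisorial contractions require no special treatment.

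Your sketch at the divisorial step is completable, and your own final paragraph supplies the missing tool: since every contracted ray is $K+B$-trivial and $E$-positive, shrinking $\epsilon$ in the middle of the run keeps all earlier steps valid steps of the scaling MMP for the smaller value (this is exactly the second assertion). So rather than trying to choose one $\epsilon$ in advance against constraints you cannot know (the circularity you identify), shrink $\epsilon$ dynamically: after each divisorial contraction, reset the base point of Proposition \ref{polytope} to the new model (again projective $\Q$-factorial klt with $K+B$ nef, since $K+B$ descends along a $K+B$-trivial contraction) and shrink $\epsilon$ below its $\delta$; the Picard number drops at each divisorial contraction, so this happens at most finitely many times. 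One misstatement to fix: the ray of a divisorial contraction lives on a model reached from the current base point by flips, so it is covered by part (3) from that base point, not by ``a fresh application of part (1) on the relevant intermediate model,'' which would require performing the reset and shrink before, rather than after, that contraction.
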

\begin{proof}
Choose some $\epsilon<1$ to start the process with.  As we know that $K_X+B$ is nef, $\lambda_0\leq \epsilon$, where $\lambda_0$ is defined as in Definition \ref{def:LMMP-scaling}.  Suppose first that $\lambda_0<\epsilon$.  Then $K_X+B-(\epsilon-\lambda_0)E$ is nef.  As $K_X+B$ is also nef, then $(X,B-\epsilon'E)$ is its own log minimal model for all $\epsilon'\in[0,\epsilon-\lambda_0]$.  This means we can terminate the process by replacing $\epsilon$ with $\epsilon-\lambda_0$.  

Suppose instead that $\lambda_0=\epsilon$.  We claim that there is a $K_X+B-\epsilon E$-negative extremal ray $R_0$ which satisfies $(K_X+B)\cdot R_0=0$.  For if not, by definition of $\lambda_0$ and Theorem \ref{lc-cone}, there exists a sequence of extremal rays $R_i$ and a decreasing sequence of rational numbers $\delta_i\to 0$ which satisfy $(K_X+B-\delta_iE)\cdot R_i<0$ but $(K_X+B)\cdot R_i>0$.  This is impossible by Proposition \ref{polytope}(1).
So there is a $K_X+B$-trivial extremal ray $R_0$ which we contract as the first step of our LMMP.  By linearity this is also $K_X+B-\epsilon'E$-negative for all $\epsilon'\in(0,\epsilon]$.  

If we have not obtained a Mori fibre space then let the result of the flip or divisorial contraction be $X_1$.  Define $\lambda_1$ as in Definition \ref{def:LMMP-scaling}.  Again $\lambda_1\leq \epsilon$.  If $\lambda_1<\epsilon$ then $(X_1,B_{X_1}-\epsilon'E_{X_1})$ is a log minimal model for $(X,B-\epsilon' E)$ for all $\epsilon'\in(0,\epsilon-\lambda_1)$, so we may terminate the process by replacing $\epsilon$ with $\epsilon-\lambda_1$.   On the other hand if $\lambda_1=\epsilon$, we get a $K_{X_1}+B_{X_1}$-trivial contraction.  Continuing this process produces the LMMP described in the statement (even if it does not terminate).  Note that we only needed to replace $\epsilon$ at the end, when we had obtained the model which will give the result.
\end{proof}

\section{Cone Theorem}\label{section-cone}

We will prove the cone theorem for log canonical pairs by passing to a crepant $\Q$-factorial dlt model and using the $\Q$-factorial dlt cone theorem (\cite[Thm 1.1]{birkar_waldron}).  

\begin{lem}\label{trivial-convex-lemma}
Let $f:V\to W$ be a surjective linear map of finite dimensional vector spaces.  Suppose $C_V\subset V$ and $C_W\subset W$ are closed convex cones of maximal dimension and $H\subset W$ is a linear subspace of codimension $1$. Assume:
\begin{itemize}
\item{$f(C_V)=C_W$,}
\item{$C_W\cap H\subset \partial C_W$.}
\end{itemize}
Then $f^{-1}H\cap C_V\subset\partial C_V$ and also $f^{-1}H\cap C_V=f^{-1}(H\cap C_W)\cap C_V$.
\end{lem}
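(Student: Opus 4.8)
The plan is to prove the two conclusions separately, starting with the set equality $f^{-1}H\cap C_V=f^{-1}(H\cap C_W)\cap C_V$, since this is elementary and will feed into the containment statement. For the set equality, I would argue by double inclusion. The inclusion $f^{-1}(H\cap C_W)\cap C_V\subset f^{-1}H\cap C_V$ is immediate, since $H\cap C_W\subset H$. For the reverse inclusion, take $x\in f^{-1}H\cap C_V$; then $f(x)\in H$, and since $x\in C_V$ and $f(C_V)=C_W$ we get $f(x)\in C_W$, hence $f(x)\in H\cap C_W$, so $x\in f^{-1}(H\cap C_W)\cap C_V$. This inclusion uses only $f(C_V)\subset C_W$, which follows from the hypothesis $f(C_V)=C_W$.

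For the main conclusion $f^{-1}H\cap C_V\subset\partial C_V$, I would argue by contradiction. Suppose some point $x\in f^{-1}H\cap C_V$ lies in the interior of $C_V$. The strategy is to push this interior membership forward through $f$: since $f$ is surjective and linear, it is an open map, so $f$ carries the interior of $C_V$ into the interior of $C_W$ (here is where I would want to use that $C_W$ has maximal dimension, so that $f(\mathrm{int}\,C_V)$, being open and contained in $C_W$, actually lands in $\mathrm{int}\,C_W$). Thus $f(x)\in\mathrm{int}\,C_W$. On the other hand $f(x)\in H$ because $x\in f^{-1}H$. But then $f(x)\in C_W\cap H\subset\partial C_W$ by the second hypothesis, contradicting $f(x)\in\mathrm{int}\,C_W$ since the interior and boundary of a convex set are disjoint.

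The step I expect to require the most care is justifying that $f$ maps interior points of $C_V$ to interior points of $C_W$. The cleanest formulation is: a surjective linear map between finite-dimensional spaces is an open map, so $f(\mathrm{int}\,C_V)$ is open in $W$; being a subset of $C_W$, and $C_W$ having maximal dimension (hence nonempty interior equal to the topological interior of the closed convex set), any open subset of $C_W$ lies in $\mathrm{int}\,C_W$. One subtlety to confirm is that $\mathrm{int}\,C_V$ is genuinely nonempty and that an interior point of the cone maps appropriately; this is where the maximal-dimension hypotheses on both $C_V$ and $C_W$ are essential, as they guarantee the interiors are the honest topological interiors rather than relative interiors. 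Once this openness fact is in place, the contradiction argument above closes immediately.
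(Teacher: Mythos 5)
Your proof is correct, and it is essentially the contrapositive of the paper's argument rather than a different idea. The paper proves the key containment directly: for $v\in f^{-1}H\cap C_V$ it notes $f(v)\in C_W\cap H\subset\partial C_W$, picks a sequence $w_i\to f(v)$ with $w_i\notin C_W$, observes that the affine fibres $f^{-1}(w_i)$ are disjoint from $C_V$ and converge to the fibre $f^{-1}(f(v))$, and so manufactures points $v_i\notin C_V$ converging to $v$, forcing $v\in\partial C_V$. That hand-built sequence is exactly a verification of the fact you invoke as a black box, namely that a surjective linear map of finite-dimensional spaces is open (fibres over nearby points pass uniformly close to any given point of a fibre). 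So your route --- interior of $C_V$ maps into interior of $C_W$, contradicting $f(x)\in C_W\cap H\subset\partial C_W$ --- buys a shorter, cleaner argument by citing the open mapping property, while the paper's version is self-contained; the set-theoretic equality is handled identically in both. One small correction: the maximal-dimension hypotheses are not used where you say they are. The step ``an open set contained in $C_W$ lies in $\operatorname{int} C_W$'' is true by the definition of interior, with no dimension assumption, and nonemptiness of $\operatorname{int} C_V$ is irrelevant to an argument by contradiction (if the interior were empty, every point of the closed cone would be a boundary point and the conclusion would be trivial). In fact neither your proof nor the paper's needs maximal dimensionality; it is simply part of the ambient setting in which the lemma is applied.
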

\begin{proof}
First we claim that $$f^{-1}(\partial C_W)\cap C_V\subset\partial C_V.$$  
Take $v\in f^{-1}(\partial C_W)\cap C_V$.  Let $w=f(v)$.  As $w\in \partial C_W$ there is a convergent sequence $w_i\to w$ such that $w_i\not\in C_W$ for all $i$.  $f^{-1}(w_i)$ is an affine space in $V$ which does not intersect $C_V$ (else $w_i\in C_W$), and these affine spaces converge to the affine space $f^{-1}(w)$.  Therefore we can choose $v_i\in f^{-1}(w_i)$ such that $v_i$ converge to $v$.   Thus we have sequence $v_i$ not in $C_V$ converging to $v$ and so $v\in\partial C_V$.

We prove the first claim of the Lemma.  Suppose $v\in f^{-1}H\cap C_V$.  Then $f(v)\in C_W\cap H\subset \partial C_W$.  So $v\in f^{-1}(\partial C_W)$ and also in $C_V$ so it is in $\partial C_V$ by the initial claim.

We prove the second claim.  Suppose $v\in f^{-1}H\cap C_V$.  Let $w=f(v)$.  Then $w\in H$, but also $w\in C_W$ as $f(C_V)=C_W$.  The other inclusion is even more obvious.
\end{proof}

\begin{proof}[Proof of Theorem \ref{lc-cone}]

$(X,B)$ has a crepant $\Q$-factorial dlt model $(Y,B_Y)$ by Proposition \ref{dlt}. This comes with birational morphism $f:Y\to X$, and $B_Y$ is defined to be the dlt boundary satisfying $K_Y+B_Y\lin_\Q \phi^*(K_X+B)$.  There is a surjective linear map of vector spaces $$f_*:N_1(Y)\to N_1(X)$$ which induces a surjection on the pseudo-effective cones $$f_*(\overline{NE}_1(Y))=\overline{NE}_1(X).$$

By the $\Q$-factorial dlt cone theorem, there is a countable collection of rational curves $C_i^Y$ on $Y$ satisfying the requirements of the cone theorem.  In particular,  
$$\overline{NE}_1(Y) = \overline{NE}_1(Y) \cap (K_Y + B_Y)_{\geq 0} +\sum_i\R_{\geq 0} \cdot [C_i^Y].$$
Let $C_i$ be the countable collection of rational curves on $X$ given by letting $C_i$ be $f_*C^Y_i$ with reduced structure.  We claim that these curves satisfy (1).  Suppose instead
$$\overline{NE}_1(X) \neq \overline{NE}_1(X) \cap (K_X + B)_{\geq 0} +\sum_i\R_{\geq 0} \cdot [C_i].$$
Then there is some $\R$-Cartier divisor $D$ which is positive on the right hand side, but non-positive somewhere on $\overline{NE}_1(X)$.  Let $A$ be an ample divisor and $\lambda = \inf\{t:D+tA\mathrm{\ is\ nef}\}$.  Then $D+\lambda A$ is nef but not ample so by Kleiman's criterion it takes value zero somewhere on $\overline{NE}_1(X)\backslash\{0\}$.  By replacing $D$ by $D+\lambda A$ we may assume $D$ is non-negative on $\overline{NE}_1(X)\backslash\{0\}$ but $D_{=0}$ intersects $\overline{NE}_1(X)$ non-trivially.  So $D_{=0}$ cuts out some extremal face $F$ of $\overline{NE}_1(X)$.  By Lemma \ref{trivial-convex-lemma}, $$F_Y:=f_*^{-1}F\cap\overline{NE}_1(Y)=f_*^{-1}D_{=0}\cap\overline{NE}_1(Y)$$ is some non-empty extremal face of $\overline{NE}_1(Y)$, which is $K_Y+B_Y$-negative away from the lower dimensional $f_*^{-1}(0)$.  But any such extremal face contains a $K_Y+B_Y$-negative extremal ray $R$ by the cone theorem \cite[1.1]{birkar_waldron} on $Y$, and $R$ contains one of the $C_i^Y$.  But then $D_{=0}$ contains one of the $C_i$, which contradicts our assumption of inequality.

The inequality $$-6 \leq (K_X +B)\cdot C_i < 0$$ follows directly from the definition of the $C_i$, the $\Q$-factorial dlt cone theorem, the projection formula and the observation $$0<\frac{(K_X+B)\cdot C_i}{(K_X+B)\cdot f_*C_i^Y}\leq 1.$$

Next we show that the rays $R_i=\R_{\geq 0}\cdot[C_i]$ do not accumulate in $(K_X+B)_{<0}$.  Suppose otherwise, so there is some sequence $R_i$ converging to a $K_X+B$-negative ray $R$.  Let $R^Y_i$ be an extremal ray in $\overline{NE}_1(Y)$ satisfying $f_*R^Y_i=R_i$.  Such a ray exists by definition of the $R_i$.  By compactness of the unit ball in $\overline{NE}_1(Y)$, some subsequence of the $R_i^Y$ must converge to a ray $R^Y$.  This must satisfy $f_*R^Y=R$, and so by the projection formula it is $K_Y+B_Y$-negative.  This contradicts the cone theorem for dlt $\Q$-factorial pairs \cite[1.1]{birkar_waldron}.

Finally, let $A$ be an ample $\R$-divisor on $X$.  Suppose there are infinitely many $C_i$ with $(K_X+B+A)\cdot C_i <0$.  By compactness, some subsequence of the corresponding $R_i$ converge to a ray $R$.  This must satisfy $(K_X+B+A)\cdot R\leq 0$, but $R\subset \overline{NE}_1(X)$ so this implies $(K_X+B)\cdot R<0$, which contradicts (4).

\end{proof}

\section{Termination}\label{sec:termination}

In this section we prove Theorem \ref{EffTerm}, using the ideas of the characteristic zero proof in \cite[3.2]{birkar_ascending_2007}.  The next remark is important for the proof.

\begin{rem}[{\cite[3.1]{birkar_ascending_2007}}]\label{ref:termination}
Given a log canonical $3$-fold pair $(X,B)$ we may take a crepant $\Q$-factorial dlt model $\phi:Y\to X$ using Proposition \ref{dlt}.  Let $B_Y$ be the dlt boundary such that $K_Y+B_Y\lin_\R\phi^*(K_X+B)$.  Suppose we have a $K_X+B$-flip $X\bir X^+/Z$.  Then $(X^+,B_{X^+})$ is the unique log canonical model of $(X,B)$ over $Z$.  As remarked in Subsection \ref{sub:3-fold-LMMP} we may run a $K_Y+B_Y$-MMP over $Z$.  If this LMMP terminates (which will follow from Theorem \ref{EffTerm}), say with $Y\bir Y^+$, $(X^+,B_{X^+})$ is also the unique log canonical model for $(Y^+,B_{Y^+})$ over $Z$, so we get a morphism $Y^+\to X^+$ and $(Y^+,B_{Y^+})$ is a crepant $\Q$-factorial dlt model of $(X^+,B_{X^+})$.
\end{rem}

\begin{proof}[Proof of Theorem \ref{EffTerm}]   
Suppose we have a log canonical pair $(X,B)$, and an infinite sequence of $K_X+B$-flips which are also $M$-flips for $M\geq 0$.
$$
\xymatrix@R=10pt@C=0.3pt{
X=X_1 \ar[rd] &   & X_2 \ar[ld] \ar[rd]& & X_3\ar[ld] &...\\
& Z_1 & & Z_2 & & ... \\
}
$$
Let $t^1=\lct(X,B,M)$ and $(X^1,B^1,M^1)=(X,B,M)$.

Let $(Y,\Delta_Y)$ be a crepant $\Q$-factorial dlt model of $(X,B+t^1M)$, with birational morphism $f:Y\to X$.
Let $B_Y$ (resp. $M_Y$) be the birational transform of $B$ (resp. $M$) on $Y$, so that $\Delta_Y=B_Y+t^1M_Y+E$ where $E$ is the reduced exceptional divisor of $f$.
In addition let $0\leq B_Y'\leq B_Y$ (resp. $0\leq M_Y'\leq M_Y$) be the divisors formed as follows: 
\begin{itemize}
\item{If a component of $B_Y$ (resp. $M_Y$) has coefficient $1$ in $B_Y+t^1M_Y$, give it coefficient $0$ in $B_Y'$ (resp. $M_Y'$).}
\item{If a component of $B_Y$ (resp. $M_Y$) has coefficient less than $1$ in $B_Y+t^1M_Y$, give it coefficient in $B_Y'$ (resp. $M_Y'$) equal to its coefficient in $B_Y$ (resp. $M_Y$).}
\end{itemize}
Thus we have $\Delta_Y=B_Y'+t^1M_Y'+\rddown{B_Y+t^1M_Y}+E$.
Run a $K_Y+\Delta_Y$-MMP$/Z_1$. If this terminates it gives us $(Y_2,\Delta_{Y_2})$ which is a crepant $\Q$-factorial dlt model for $(X_2,B_2+t^1M_2)$ by Remark \ref{ref:termination}.  In particular this cannot be isomorphic to $(Y,\Delta_Y)$.  Now repeat the process from $(Y_2,\Delta_{Y_2})$.  Either way we get an infinite sequence of $K_Y+\Delta_Y$-flips. 

By Proposition \ref{special-term} these flips are eventually isomorphisms near $\rddown{\Delta_Y}$, so we may replace $Y$ and the sequence of flips with a truncated version to assume that each flipping locus is disjoint from $\Supp(\rddown{\Delta_Y})$ and also to ensure there are no divisorial contractions.  Each of the $K_Y+\Delta_Y$-flips is now also a $K_Y+B_Y'+t^1M_Y'$-flip and an $M_Y'$-flip.  This is because $K_Y+B_Y'+t^1M_Y'$ (respectively $M_Y'$) only differs from $K_Y+\Delta_Y$ (respectively $M_Y$) on the components of $\Supp(\rddown{\Delta_Y})$.  By assumption the flipping loci are all disjoint from the birational transforms of $\Supp(\rddown{\Delta_Y})$, and so the intersections with the flipping curves are unchanged by changing coefficients of components of $\Supp(\rddown{\Delta_Y})$.

Let $t^2=\lct(Y,B_Y',M_Y')$.  $t^2>t^1$ because $(Y,B_Y'+t^1M_Y')$ is klt by construction.  
Let $(X^2,B^2,M^2,t^2)=(Y,B_Y',M_Y',t^2)$.  We are in the same situation with $X^2$ as we began in with $X$.  Therefore we can inductively create a sequence $t^1<t^2<t^3<...$ of log canonical thresholds for pairs and divisors with coefficients in a finite set.  This contradicts ACC (\cite[1.10]{birkar_p}), so there cannot have been such an infinite sequence of flips. 
\end{proof}

We now use this to extend special termination for $\Q$-factorial dlt pairs (Proposition \ref{special-term}) to general log canonical pairs.

\begin{prop}
Let $(X,B)$ be a projective log canonical $3$-fold pair over $k$.  Suppose we are given a sequence of $K_X+B$-flips, $$X=X_1\bir X_2\bir X_3\bir...$$
Then the flipping locus is eventually disjoint from $\LCS(X,B)$.
\end{prop}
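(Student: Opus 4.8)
The plan is to reduce everything to the dlt case, where special termination (Proposition \ref{special-term}) is available, by passing to a crepant $\Q$-factorial dlt model and feeding in the termination result just proved.

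First I would fix a crepant $\Q$-factorial dlt model $f\colon (Y,B_Y)\to (X_1,B_1)$ using Proposition \ref{dlt}, so that $K_Y+B_Y\lin_\R f^*(K_{X_1}+B_1)$. Every $f$-exceptional divisor then has log discrepancy $0$, hence occurs with coefficient $1$ in $B_Y$, and $f$ is small over the klt locus; together with the strict transform of $\rddown{B_1}$ and Proposition \ref{niceintersection} this gives $f(\rddown{B_Y})=\LCS(X_1,B_1)$. The same description holds at every later stage, so throughout I will use that, for a dlt model $f_i\colon Y_i\to X_i$, the set $\LCS(X_i,B_i)$ is exactly the image $f_i(\rddown{B_{Y_i}})$.

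Next I would lift the given sequence of lc flips to the dlt models. For each flip $X_i\bir X_{i+1}/Z_i$, Remark \ref{ref:termination} (whose termination statement is supplied by Theorem \ref{EffTerm}) lets me run a $K_Y+B_Y$-MMP over $Z_i$; this terminates with a model $f_{i+1}\colon Y_{i+1}\to X_{i+1}$ that is again a crepant $\Q$-factorial dlt model. Since $X_i\cong X_{i+1}$ over $Z_i\smallsetminus W_i$, where $W_i\subset Z_i$ is the finite flipping locus downstairs, the induced birational map $g_i\colon Y_i\bir Y_{i+1}$ is a $Z_i$-isomorphism over $Z_i\smallsetminus W_i$, so its non-isomorphism locus lies over $W_i$. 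Concatenating over all $i$ produces a single $K_Y+B_Y$-MMP consisting of flips and divisorial contractions. The Picard number is a positive integer that strictly decreases at each divisorial contraction and is unchanged by flips, so only finitely many divisorial contractions occur; after discarding an initial segment I may assume the concatenated sequence is a sequence of flips, to which Proposition \ref{special-term} applies. It yields an integer $N$ such that every flip past the $N$-th is an isomorphism near $\rddown{B_Y}$; choosing $i$ large enough that the whole block $Y_i\bir Y_{i+1}$ consists of such flips, I conclude that $g_i$ is an isomorphism in a neighbourhood of $\rddown{B_{Y_i}}$.

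Finally I would descend this back to $X_i$. As $g_i$ is a $Z_i$-isomorphism on an open neighbourhood of $\rddown{B_{Y_i}}$ carrying $\rddown{B_{Y_i}}$ onto $\rddown{B_{Y_{i+1}}}$, and the contracted divisors of $f_i$, $f_{i+1}$ are among the components of these reduced boundaries, the two dlt contractions agree under $g_i$ near the reduced boundary; hence $X_i$ and $X_{i+1}$ are $Z_i$-isomorphic in a neighbourhood of $\LCS(X_i,B_i)=f_i(\rddown{B_{Y_i}})$. Consequently, if some flipping curve $C$ met $\LCS(X_i,B_i)$ at a point $x$, then $X_i\bir X_{i+1}$ would be an isomorphism at $x$, contradicting that $x$ lies on the flipped curve $C$. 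Therefore, for all $i\ge N$, the flipping locus is disjoint from $\LCS(X_i,B_i)$, as claimed. The main obstacle is precisely this last descent: one must verify that a $Z_i$-isomorphism of the dlt models near $\rddown{B_{Y_i}}$ genuinely induces an isomorphism of the lc pairs near $\LCS$, i.e. that the dlt-model contraction is determined, near the reduced boundary, by exactly the data that $g_i$ preserves.
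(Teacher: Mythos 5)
Your overall strategy matches the paper's: pass to a crepant $\Q$-factorial dlt model, lift the sequence of lc flips via the relative MMPs of Remark \ref{ref:termination} (made to terminate by Theorem \ref{EffTerm}), apply special termination (Proposition \ref{special-term}) upstairs, and your Picard-number argument eliminating divisorial contractions is a fine substitute for the paper's corresponding reduction. The problem is the final descent step, which is precisely the one you flag as the ``main obstacle'': it is not a verification you have postponed, it is a genuine gap, and it is where the real content of the proposition lies. The reason it fails as stated is that $X_i$ is only log canonical, not $\Q$-factorial, so the crepant dlt morphism $f_i\colon Y_i\to X_i$ can have \emph{one-dimensional} exceptional components. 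Such a curve $C$ may be contracted to a point of $\LCS(X_i,B_{X_i})$ while meeting $\rddown{B_{Y_i}}$ only in finitely many points (it must meet it, by connectedness of fibres, but need not lie in it). Hence $f_i^{-1}(\LCS(X_i,B_{X_i}))$ is in general $\rddown{B_{Y_i}}$ together with finitely many such contracted curves, and no neighbourhood $U$ of $\rddown{B_{Y_i}}$ contains the full $f_i$-fibres over $\LCS(X_i,B_{X_i})$. Consequently $f_i|_U$ is not proper over its image, and knowing that $Y_i\bir Y_{i+1}$ is an isomorphism on $U$ says nothing about what happens along $C\smallsetminus U$; the contraction $X_i$ near such a point of $\LCS(X_i,B_{X_i})$ is simply not determined by the data your isomorphism preserves. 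Concretely, the dangerous configuration --- a flipping curve of $X_i\bir X_{i+1}$ meeting $\LCS(X_i,B_{X_i})$ exactly at the image point of a contracted curve $C$, with every curve modified by $Y_i\bir Y_{i+1}$ meeting $C$ but staying off $\rddown{B_{Y_i}}$ --- is fully consistent with special termination upstairs, so your argument cannot rule it out.

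The paper's proof devotes its entire second half to excluding exactly this scenario, and it does so by a negativity argument rather than by descending an isomorphism. It takes a common resolution $\phi_i\colon W\to Y_i$, $\phi_{i+1}\colon W\to Y_{i+1}$, isomorphic over the locus where $Y_i\bir Y_{i+1}$ is an isomorphism, and sets $D=\phi_i^*(K_{Y_i}+B_{Y_i})-\phi_{i+1}^*(K_{Y_{i+1}}+B_{Y_{i+1}})$. By the negativity lemma $D$ is effective; it is anti-nef over $Z_i$, and it is disjoint from $\phi_i^{-1}(\rddown{B_{Y_i}})$ because the map is an isomorphism there. If a flipping curve met $\LCS(X_i,B_{X_i})$, the projection formula would force a curve $\Gamma_W\subset\Supp D$ meeting $\phi_i^{-1}f_i^{-1}(\LCS(X_i,B_{X_i}))$; since $f_i^{-1}(\LCS(X_i,B_{X_i}))$ is $\rddown{B_{Y_i}}$ plus finitely many curves contracted over $X_i$, the support of $D$ would be joined to $\phi_i^{-1}(\rddown{B_{Y_i}})$ by a chain of curves contracted over $Z_i$, and some curve of that chain not contained in $\Supp D$ would have to intersect $D$ positively --- contradicting anti-nefness of $D$ over $Z_i$. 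Some argument of this kind (handling the curves in $f_i^{-1}(\LCS(X_i,B_{X_i}))$ that are not in $\rddown{B_{Y_i}}$) is what you need to supply; as written, the assertion that the two dlt contractions ``agree near the reduced boundary'' assumes what is to be proved.
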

\begin{proof}
%We may assume that the flipping loci do not contain any component of $\LCS(X,B)$, as there are only finitely many log canonical centres, and performing a flip whose flipping locus contains a centre removes it by \cite[3.38]{kollar_birational_1998}.

Suppose the flipping contractions are $f_i:X_i\to Z_i$ with $X=X_1$.  Let $(Y_1,B_{Y_1})$ be a crepant $\Q$-factorial dlt model of $(X,B)$, which exists by Proposition \ref{dlt}.  Run the $K_{Y_1}+B_{Y_1}$-MMP$/Z_1$ as in Remark \ref{ref:termination} .  
This LMMP is also a $K_{Y_1}+B_{Y_1}+f_1^*A$-MMP for $A$ ample on $Z_1$.  In particular we may choose $A$ sufficiently ample that $K_{Y_1}+B_{Y_1}+f_1^*A$ is big, and so the LMMP terminates by Theorem \ref{EffTerm}.   By Remark \ref{ref:termination} it terminates on $(Y_2,B_{Y_2})$, a crepant $\Q$-factorial dlt model for $(X_2,B_{X_2})$. Continuing to run these LMMPs we get a diagram:
$$
\xymatrix{
Y_1 \ar@{-->}[r] \ar[d] & Y_2\ar[d] \ar@{-->}[r]  & Y_3 \ar[d] \ar@{-->}[r]  & ...\\
X_1 \ar@{-->}[r]       & X_2 \ar@{-->}[r]       & X_3 \ar@{-->}[r]        & ...\\}
$$
where $(Y_{i},B_{Y_i})$ is a $\Q$-factorial dlt model of $(X_i,B_{X_i})$ with birational morphism $g_i:Y_i\to X_i$.  
The top row is a $K_{Y_1}+B_{Y_1}$-MMP, so by special termination for $\Q$-factorial dlt pairs the birational maps $Y_i\bir Y_{i+1}$ are isomorphisms near $\rddown{B_{Y_i}}$ for $i\gg 0$.   Replace the sequences to assume that this holds for all $i$.  We may also assume that there are no divisorial contractions in $Y_i\bir Y_{i-1}$ for each $i$.

Suppose $X_i\bir X_{i+1}$ is not an isomorphism near $\LCS(X_i,B_{X_i})$. 
 %By our first remark we know that no component of $\LCS(X_i,B_{X_i})$ is contained in the flipping locus, so $\LCS(X_i,B_{X_i})$ meets the flipping locus at finitely many points.  
Let $\phi_i:W\to Y_i$ and $\phi_{i+1}:W\to Y_{i+1}$ be birational morphisms resolving the rational map $Y_i\bir Y_{i+1}$.  
We can assume that $\phi_i$ and $\phi_{i+1}$ are isomorphisms over the locus where $Y_i\bir Y_{i+1}$ is an isomorphism.  In particular they are isomorphisms near $\rddown{B_{Y_i}}$ and $\rddown{B_{Y_{i+1}}}$.
$$D:=\phi_i^*(K_{Y_i}+B_{Y_i})-\phi^*_{i+1}(K_{Y_{i+1}}+B_{Y_{i+1}})$$
is exceptional over $Y_{i}$ and  anti-nef $/Z_i$, so is effective.  $Y_i\bir Y_{i+1}$ being an isomorphism near $\rddown{B_{Y_i}}$ implies that $D$ does not intersect $\phi^{-1}(\rddown{B_{Y_i}})$.
Note that $$D=\phi_i^*g_i^*(K_{X_i}+B_{X_i})-\phi_{i+1}^*g_{i+1}^*(K_{X_{i+1}}+B_{X_{i+1}}).$$

Let $\Gamma_{X_i}$ be a curve in the flipping locus which is not disjoint from $\LCS(X_i,B_{X_i})$.  Let $\Gamma_{Y_i}$ be a curve on $Y_i$ surjective to $\Gamma_{X_i}$ and $\Gamma_W$ a curve on $W$ surjective to $\Gamma_{Y_i}$.   As $\Gamma_{X_i}$ is a flipping curve, $(K_{X_i}+B_{X_i})\cdot \Gamma_{X_i}<0$.  By the projection formula, we see that $D\cdot \Gamma_W<0$, and so $\Gamma_W$ is contained in $\Supp(D)$.

By assumption, $\Gamma_{X_i}$ intersects $\LCS(X_i,B_{X_i})$ and so $\Gamma_W$ intersects $\phi_i^{-1}g_i^{-1}(\LCS(X_i,B_{X_i}))$.  $g_i^{-1}(\LCS(X_i,B_{X_i}))$ consists of $\rddown{B_{Y_i}}$ and possibly finitely many curves which are contracted over $X_i$.  We know that $D$ cannot intersect $\phi_i^{-1}(\rddown{B_{Y_i}})$, so $D$ must be connected to $\phi_i^{-1}(\rddown{B_{Y_i}})$ by a chain of curves, each of which is contracted over $Z_i$.
 Suppose for contradiction that some of these curves are not contained in $\Supp(D)$.  There must be one such curve, $C$ say, which satisfies $D\cdot C>0$, because at least one must intersect $D$ but not be contained in $\Supp(D)$.
  But $C$ is contracted over $Z_i$, which gives a contradiction, for $D$ is anti-nef$/Z_i$.
  
  %$Y_i\to X_i$, and so satisfies $\phi_{i+1}^*g_{i+1}^*(K_{X_{i+!}}+B_{X_{i+1}})\cdot C <0$.  But $C$ is contracted over $Z_i$, so this contradicts the fact that $K_{X_{i+1}}+B_{X_{i+1}}$ is nef$/Z_i$.  This finally implies that $D$ and $\phi^{-1}(\rddown{B_{Y_i}})$ are not disjoint, which gives a contradiction to the existence of the non-terminating sequence of flips.

\end{proof}

\section{Restriction}\label{sec-restrict}

In characteristic zero, the reduced boundary $\rddown{B_Y}$ of a dlt pair $(Y,B_Y)$ is $S_2$ and consequently demi-normal.  The proof \cite[17.5]{flips_and_abundance} uses vanishing theorems and may fail in positive characteristic.  Instead we exploit the Frobenius morphism to work on a partial normalisation in place of $\rddown{B_Y}$.

\begin{prop}
Let $(Y,B_Y)$ be a $\Q$-factorial dlt $3$-fold pair over $k$.  Let $\pi:S\to \rddown{B_Y}$ be the $S_2$-fication of $\rddown{B_Y}$.  Then $\pi:S\to \rddown{B_Y}$ is a finite universal homeomorphism from a demi-normal scheme.
\end{prop}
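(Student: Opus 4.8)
The plan is to verify the two conclusions separately: that $\pi$ is a finite universal homeomorphism, and that $S$ is demi-normal. Throughout I would use Proposition \ref{niceintersection} to control the geometry of $\rddown{B_Y}=\sum_i D_i$: each component $D_i$ is a normal surface, each pairwise intersection $D_i\cap D_j$ with $i\neq j$ is a normal (hence smooth) curve of pure codimension $2$ in $Y$, and the triple intersections are reduced points. In particular $\rddown{B_Y}$ is reduced and equidimensional of dimension $2$, so Proposition \ref{S2-fication} applies: $\pi$ is finite and birational, it is an isomorphism exactly over the open set where $\rddown{B_Y}$ is $S_2$, and the complement $\Sigma$ of this set has $\codim\geq 2$, hence is a finite set of closed points. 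Surjectivity of $\pi$ is immediate, since a finite dominant morphism is surjective, so by Proposition \ref{prop:univ-homeo} the universal homeomorphism claim reduces to showing that $\pi$ is injective on geometric points; and since $\pi$ is already an isomorphism away from $\Sigma$, this amounts to checking that $\pi^{-1}(p)$ is a single point for each $p\in\Sigma$.

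For demi-normality, $S$ is $S_2$ by construction (Proposition \ref{S2-fication}(2)), so I only need that $S$ has at worst nodes in codimension $1$. Since $\pi$ is an isomorphism away from the finite set $\Sigma$, it is an isomorphism at every codimension-$1$ point, so it suffices to prove the same statement for $\rddown{B_Y}$. At the generic point $\eta$ of a curve $C\subseteq\rddown{B_Y}$ that is not a double curve, locally $\rddown{B_Y}$ equals the single normal surface $D_i$ containing $C$, which is regular in codimension $1$, so $\rddown{B_Y}$ is regular at $\eta$. At the generic point $\eta$ of a double curve $C=D_i\cap D_j$, I would use that $C$ is a log canonical centre (Proposition \ref{niceintersection}), so that $\eta$ lies outside the locus where $(Y,B_Y)$ fails to be log smooth; thus $Y$ is regular at $\eta$ and $\Supp B_Y$ is snc there. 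As no third component can contain $C$ (that would force $C$ into a triple intersection, which has codimension $3$), locally $\rddown{B_Y}=D_i\cup D_j$ is a union of two regular divisors meeting transversally, i.e.\ a node. Hence $\rddown{B_Y}$, and therefore $S$, is nodal in codimension $1$, proving demi-normality.

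It remains to show that $\pi^{-1}(p)$ is a single point for each $p\in\Sigma$, which I expect to be the main obstacle. The number of points of the $S_2$-fication lying over $p$ equals the number of connected components of the punctured local (equivalently, formal) neighborhood of $p$ in $\rddown{B_Y}$, so the task becomes a connectedness statement. Here the purity in Proposition \ref{niceintersection} is decisive. The local branches of $\rddown{B_Y}$ at $p$ are exactly the germs of the surfaces $D_i$ passing through $p$, each analytically irreducible since $D_i$ is normal, and each such punctured germ is connected because a normal local ring of dimension $2$ has irreducible, hence connected, punctured spectrum. Moreover, whenever two components $D_i,D_j$ both pass through $p$ we have $p\in D_i\cap D_j$, so by purity $D_i\cap D_j$ contains a curve through $p$; this curve joins the germs of $D_i$ and $D_j$ inside the punctured neighborhood. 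Thus all branches through $p$ are mutually connected, the punctured neighborhood is connected, and $\pi^{-1}(p)$ consists of a single point. Combining this with surjectivity gives that $\pi$ is a finite universal homeomorphism, and together with the previous paragraph this completes both assertions.
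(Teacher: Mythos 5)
Your proof takes a genuinely different route from the paper's at the crucial step, and in substance it can be made to work, but as written it rests on one unproved and partially misstated fact. The geometric inputs are the same as the paper's (normality of the components and purity of the double intersections, both from Proposition \ref{niceintersection}), and your demi-normality argument --- proving nodality in codimension $1$ directly from the dlt condition at the generic points of the double curves, rather than citing \cite[2.32]{kollar_singularities_2013} as the paper does --- is correct. The real difference is how injectivity over the finite non-$S_2$ locus is obtained. The paper argues elementarily: it factors the normalisation (the disjoint union of the normal components) through $S$, takes the smooth curve $\Gamma\subset E_1\cap E_2$ through the bad point $P$, observes that the preimage of $\Gamma$ in $S$ is a single irreducible curve $\Gamma_S$ which must contain both alleged preimages $Q_1,Q_2$, and gets a contradiction because the normalisation of $\Gamma_S$ maps finitely and birationally, hence isomorphically, onto the smooth curve $\Gamma$. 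You instead invoke the general principle that the fibre of the $S_2$-fication over $p$ is in bijection with the connected components of the punctured formal neighbourhood of $p$, and then prove that this punctured neighbourhood is connected (analytically irreducible branches, joined pairwise by curves in the double intersections); that connectedness argument is correct, and it is the same phenomenon the paper's curve trick exploits.

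The issue is with the bijection itself. First, it is not among the paper's stated tools and is not trivial: it requires the splitting of finite algebras over complete (or henselian) local rings together with Hartshorne's connectedness theorem ($S_2$ and dimension $\geq 2$ imply connected punctured spectrum), applied to the completions of the local rings of $S$; you need to either prove it along these lines or give a reference. Second, your parenthetical ``(equivalently, formal)'' is false: the Zariski punctured local spectrum and the formal one need not have the same number of connected components, and the Zariski version of the counting statement fails. For example, glue two points of $\A^2$ to a single point $p$: the resulting surface is irreducible, so the punctured spectrum of its local ring at $p$ is connected, yet its $S_2$-fication is $\A^2$ and has two points over $p$. Your connectedness argument does establish the formal statement, which is the correct one, so the slip is repairable, but the reduction must be stated for formal or henselian neighbourhoods only. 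With that fact correctly stated and justified, your proof is complete; the trade-off against the paper's argument is self-containedness (the paper needs nothing beyond its quoted propositions and elementary curve theory) versus a more conceptual explanation of why injectivity holds.
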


\begin{proof}
$\pi$ is an isomorphism in codimension $1$ because $\rddown{B_Y}$ is reduced and equidimensional.  We show that $\pi$ is injective and surjective on geometric points in order to apply Proposition \ref{prop:univ-homeo}.  It is  surjective because the normalisation is surjective and $\pi$ factors into the normalisation by Proposition \ref{S2-fication}.  

Suppose $P$ is a geometric point on $\rddown{B_Y}$ with more than one pre-image.  We will use what we know of $\rddown{B_Y}$ from Proposition \ref{niceintersection} to reach a contradiction.  Firstly note that each component of $\rddown{B_Y}$ is normal.  This means that we can identify the normalisation of $\rddown{B_Y}$ with the disjoint union of its components. Now observe that $P$ cannot have more than one pre-image in any irreducible component of $S$, because $\pi$ factors into the normalisation of $\rddown{B_Y}$, which is just projection from the disjoint union of the components.

Therefore we may assume $P$ is contained in at least two components $E_1$ and $E_2$ of $\rddown{B_Y}$ and has distinct pre-images $Q_i$ for $i=1,2$, each contained in the component of $S$ corresponding to $E_i$.  $C:=E_1\cap E_2$ is of pure dimension $1$ and its irreducible components are smooth curves by Proposition \ref{niceintersection}.  Let $R_i$ be the unique geometric point in the pre-image of $P$ in $E_i\subset E_1\sqcup E_2$.  As $\pi$ factors through the normalisation $E_1\sqcup E_2\to\rddown{B_Y}$, the image of $R_i$ in $S$ is $Q_i$.  The pre-image of $C$ in $E_1\sqcup E_2$ is supported in pure dimension $1$.  Therefore the pre-image of $C$ on $S$, $C_S$ is also supported in pure dimension $1$.  If $\Gamma$ is an irreducible component of $C$ which contains $P$, the pre-image $\Gamma_i$ of $\Gamma$ in $E_i\subset E_1\sqcup E_2$ is an irreducible curve passing through $R_i$.  There is a unique irreducible curve $\Gamma_S$ which is the pre-image of $\Gamma$ on $S$ because $\pi$ is an isomorphism away from a finite set of points.  So $\Gamma_S$ is the image of both $\Gamma_i$ and it follows that $\Gamma_S$ must contain both $Q_1$ and $Q_2$.  Now let $\Gamma^\nu$ be the normalisation of $\Gamma_S$.  The composition $\Gamma^\nu\to \Gamma$ is an isomorphism of smooth curves, but some point has two geometric pre-images.  We have a contradiction, and so $\pi$ must be a universal homeomorphism.

$S$ is nodal in codimension $1$ because $\rddown{B_Y}$ is nodal in codimension $1$ by \cite[2.32]{kollar_singularities_2013} and $\pi$ is an isomorphism in codimension $1$. $S$ is $S_2$ by definition, so we conclude that $S$ is demi-normal.
\end{proof}

\begin{proof}[Proof of Theorem \ref{restrict}]
The conditions (1)-(6) of \cite[4.2]{kollar_singularities_2013} for adjunction to $S$ are satisfied.  The only condition which is not obvious is (5), which holds because $S$ is demi-normal \cite[5.1]{kollar_singularities_2013}. 
Thus there is a different, $B_S$, satisfying 
$$\pi^*(K_Y+B_Y)\lin_\Q K_S+B_S.$$
We can check discrepancies to ensure this pair is slc by pulling back to the normalisation, and applying adjunction to the individual normal components of $\rddown{B_Y}$.
By Tanaka's abundance for slc surfaces \cite{tanaka_slc}, $K_S+B_S$ is semi-ample.  Now because $S\to \rddown{B_Y}$ is a finite universal homeomorphism, by Proposition \ref{univhom}, $(K_Y+B_Y)|_{\rddown{B_Y}}$ is semi-ample.
\end{proof}

\section{Good log minimal models}\label{section-GLMM}

\subsection{Big log divisors}

In this subsection we prove Theorem \ref{lc-basepoint-free}, which will allow us to contract birational extremal rays. 

\begin{proof}[Proof of Theorem \ref{lc-basepoint-free} when $Z$ is a point] \

\textbf{Step 1}: Set-up.

We first prove that if $(Y,B_Y)$ is a projective $\Q$-factorial dlt $3$-fold pair over $k$ with $\Q$-boundary $B_Y$, such that $K_Y+B_Y$ is big and nef, then $K_Y+B_Y$ is semi-ample.  We spend most of the proof on this case, and then extend to general log canonical pairs with $\R$-boundaries in the final step.  We may assume $\rddown{B_Y}\neq 0$ as otherwise $(Y,B_Y)$ is klt and we can apply the base point free theorem \cite[1.4]{birkar_p}.  If it were the case that  $\E(K_Y+B_Y)\subseteq\rddown{B_Y}$, we would be able to apply Theorem \ref{KeelE} together with Theorem \ref{restrict} to conclude the statement.  However, this need not be true, so we proceed by modifying $Y$ to reach a situation where it holds.  For this reason, we would like to remove $K_Y+B_Y$-trivial curves which are not contained in $\rddown{B_Y}$.

\begin{comment} contracting components of $\E(K_Y+B_Y)$ until it holds.  By adapting the proof of \cite[0.5]{keel_basepoint_1999} it is possible to show that in fact $\E(K_Y+B_Y)=\cup\{\mathrm{Curves\ }C: (K_Y+B_Y)\cdot C=0\}$.  This means we should expect it to be sufficient to contract $K_Y+B_Y$-trivial curves, although we do not use the above mentioned fact in our proof.
\end{comment}

\textbf{Step 2}: Contract $K_{Y}+B_{Y}$-trivial curves which intersect $\rddown{B_{Y}}$ positively.

$K_Y+B_Y$ is big by assumption, so let $\epsilon$ be sufficiently small that $K_Y+B_Y-\epsilon\rddown{B_Y}$ is also big, and so any $K_Y+B_Y-\epsilon\rddown{B_Y}$-MMP terminates by Theorem \ref{EffTerm}.  If we run a $K_Y+B_Y-\epsilon\rddown{B_Y}$-MMP with scaling of $\rddown{B_Y}$, by Lemma \ref{lem:LMMP-scaling-num-trivial}, we can replace $\epsilon$ by a smaller number to assume that the LMMP contracts only $K_Y+B_Y$-trivial extremal rays.

\begin{comment}
Run a $K_{Y}+B_{Y}-\epsilon\rddown{B_{Y}}$-MMP with scaling of $\rddown{B_{Y}}$.  We are free to replace $\epsilon$ by a smaller $\epsilon'$ at any time, so long as we have not yet contracted extremal rays which are non-negative for $K_Y+B_Y-\epsilon'\rddown{B_Y}$.  By definition of the LMMP with scaling (see \cite[2.8]{birkar_waldron}), the first few extremal rays contracted are $K_Y+B_Y-\epsilon\rddown{B_Y}+\lambda \rddown{B_Y}$-trivial, where $$\lambda=\inf\{t:K_{Y}+B_{Y}-\epsilon\rddown{B_Y}+\lambda\rddown{B_Y}\mathrm{\ is\ nef}\}.$$
In our situation, $\lambda\leq \epsilon$ as $K_Y+B_Y$ is nef.  If $\lambda<\epsilon$, we may replace $\epsilon$ with a smaller number so that $K_Y+B_Y-\epsilon\rddown{B_Y}$ is nef, and no steps of the LMMP are needed.  If $\lambda =\epsilon$ then the first few steps of the LMMP are $K_Y+B_Y$-trivial, but eventually we must choose a new $\lambda'$ which is less than $\epsilon$.  At that point we may replace $\epsilon$ with $\frac{\epsilon+\lambda}{2}$ so that no further steps are needed.  Thus by choosing $\epsilon$ sufficiently small we may assume that every step of our LMMP is $K_Y+B_Y$-trivial.
\end{comment}

  As any contraction $Y_i\to Z_i$ in this LMMP is $K_{Y_i}+B_{Y_i}$-trivial, we claim that at each step $K_{Y_i}+B_{Y_i}$ pulls back from some $\Q$-Cartier divisor on $Z_i$.  
This follows from the klt cone and base point free theorems (see \cite[3.7(4)]{kollar_birational_1998}).

Therefore this LMMP results in a model $Y'$ on which the birational transform $K_{Y'}+B_{Y'}$ is semi-ample if and only if $K_Y+B_Y$ is semi-ample.  
$(Y',B_{Y'})$ is $\Q$-factorial and log canonical but may no longer be dlt.  By our application of Lemma \ref{lem:LMMP-scaling-num-trivial}, we have that $K_{Y'}+B_{Y'}-\epsilon'\rddown{B_{Y'}}$ is nef for all $\epsilon'\in[0,\epsilon]$.  Thus any $K_{Y'}+B_{Y'}$-trivial curve cannot intersect $\rddown{B_{Y'}}$ positively: it is therefore forced to either be disjoint from $\rddown{B_{Y'}}$ or completely contained in $\rddown{B_{Y'}}$.  

\textbf{Step 3}: Contract $K_{Y'}+B_{Y'}$-trivial curves not contained in $\rddown{B_{Y'}}$.

The underlying variety $Y'$ is klt and $\Q$-factorial because it was formed by running the LMMP from a klt $\Q$-factorial pair, so by Proposition \ref{polytope}(1), we may replace $\epsilon$ to be sufficiently small that any $K_{Y'}+B_{Y'}-\epsilon\rddown{B_{Y'}}$-trivial curve is also $K_{Y'}+B_{Y'}$-trivial.
Apply the base point free theorem \cite[1.4]{birkar_p} on the klt $K_{Y'}+B_{Y'}-\epsilon\rddown{B_{Y'}}$ to produce another birational $K_{Y'}+B_{Y'}$-trivial  contraction  $f:Y'\to Y''$ such that $K_{Y'}+B_{Y'}-\epsilon\rddown{B_{Y'}}\lin_\Q f^*A$ for some ample $\Q$-divisor $A$.  
Notice that if $0<\epsilon'<\epsilon$,  $K_{Y'}+B_{Y'}-\epsilon'\rddown{B_{Y'}}$ is also semi-ample and its associated morphism contracts the same curves as $f$.  Therefore $K_{Y'}+B_{Y'}-\epsilon'\rddown{B_{Y'}}$ also pulls back from $Y''$.

This implies $\rddown{B_{Y'}}$ pulls back from a $\Q$-Cartier divisor $D$ on $Y''$, and so does $K_{Y'}+B_{Y'}$.  Let $B_{Y''}$ be the birational transform of $B_{Y'}$ on $Y''$, so that $K_{Y'}+B_{Y'}=f^*(K_{Y''}+B_{Y''})$ and $D=f_*\rddown{B_{Y'}}=\rddown{B_{Y''}}$.

We get a new log canonical pair $(Y'',B_{Y''})$ such that $K_{Y''}+B_{Y''}\lin_\Q A+\epsilon D$ for ample $A$ and effective $D$ as above.  This implies that $\E(K_{Y''}+B_{Y''})\subset D=\rddown{B_Y''}$, and we also know that $K_{Y''}+B_{Y''}$ is semi-ample if and only if $K_{Y'}+B_{Y'}$ is.  However $(Y'',B_{Y''})$ need not be either dlt or $\Q$-factorial in general, so we still cannot apply Theorem \ref{restrict}.

\textbf{Step 4}:  Construct a model where Keel's theorem applies.

Let $(Y''',B_{Y'''})$ be a $\Q$-factorial dlt model of $(Y'',B_{Y''})$ with morphism $g:Y'''\to Y''$.  We claim that every irreducible component of $\E(K_{Y'''}+B_{Y'''})$ is either contained within $\rddown{B_{Y'''}}$ or is completely disjoint from it.  First note that $\LCS(Y'',B_{Y''})=\rddown{B_{Y''}}$, because $\rddown{B_{Y'}}$ pulls back from $\rddown{B_{Y''}}$, and $\LCS(Y',B_{Y'})= \rddown{B_{Y'}}$ because $(Y',B_{Y'}-\epsilon\rddown{B_{Y'}})$ is klt.

To complete the proof of the claim, suppose $V$ is an irreducible component of $\E(K_{Y'''}+B_{Y'''})$ and first assume that it is $2$-dimensional.  If $V$ is contracted over $Y''$ then it is in $\rddown{B_{Y'''}}$ by definition of the crepant dlt $\Q$-factorial modification.  
If it is not contracted over $Y''$ then its birational transform on $Y$ is in $\E(K_{Y''}+B_{Y''})\subset \rddown{B_{Y''}}$. 
So we may assume $V$ is $1$-dimensional.  $g^*\rddown{B_{Y''}}$ is an effective $\Q$-Cartier divisor on $Y'''$ with support equal to $\rddown{B_{Y'''}}$ (because $\LCS(Y'',B_{Y''})=\rddown{B_{Y''}}$).  If $V$ is contracted over $Y''$ then by the projection formula $V\cdot g^*\rddown{B_{Y''}}=0$.  If it is not contracted over $Y''$ then its birational transform is again contained within $\E(K_{Y''}+B_{Y''})$ and hence within $\rddown{B_{Y''}}$.  Either way this implies that either $V$ is contained within $\rddown{B_{Y'''}}$ or it is completely disjoint from it.  This completes the proof of the claim.

We may now apply Keel's theorem (Proposition \ref{KeelE}) to $(Y''',B_{Y'''})$.  Any connected component of $\E(K_{Y'''}+B_{Y'''})$ is either contained within $\rddown{B_{Y'''}}$ or is completely disjoint from it.  In this first case $K_{Y'''}+B_{Y'''}$ is semi-ample when restricted to this connected component by Theorem \ref{restrict}.  In the second it is semi-ample because in a neighbourhood of the component, $K_{Y'''}+B_{Y'''}$ is equal to $$K_{Y'''}+B_{Y'''}-\epsilon g^*\rddown{B_{Y''}}=g^*(K_{Y''}+B_{Y''}-\epsilon\rddown{B_{Y''}})$$ which is nef and big, and the pair $(Y''',B_{Y'''}-\epsilon g^*\rddown{B_{Y''}})$ is klt because $\Supp g^*\rddown{B_{Y''}}=\rddown{B_{Y'''}}$, so we may apply base point freeness in the klt case (\cite[1.4]{birkar_p}).

\textbf{Step 5} Log canonical pairs with $\R$-boundaries.

So far we have proved Theorem \ref{lc-basepoint-free} for $\Q$-factorial dlt pairs with $\Q$-boundaries.  Suppose now that $(X,B)$ is as in the statement, i.e. log canonical with $\R$-boundary $B$.  A crepant $\Q$-factorial dlt model $\phi:Y\to X$ exists by Proposition \ref{dlt}, and let $B_Y$ be the dlt $\R$-boundary defined by $K_Y+B_Y=\phi^*(K_X+B)$.  Let $V$ be the $\R$-vector space of Weil divisors spanned by the components of $B_Y$, and define $\mathcal{L}$ to be the rational polytope from Propostion \ref{polytope}.  Apply Proposition \ref{polytope}(2) with the family of extremal rays equal to all extremal rays of $\overline{NE}(X)$.  We get a smaller rational polytope $\mathcal{P}$ containing $B_Y$ such that $K_Y+\Delta$ is nef for all $\Delta\in\mathcal{P}$.  Let the vertices of $\mathcal{P}$ be $B_1,...,B_n$.  We may shrink $\mathcal{P}$ around $B_Y$ to assume that $K_Y+B_i$ is big for all $i$.  By the case we have already proved, $K_Y+B_i$ is semi-ample for each $i$.  But we can write $B_Y$ as some $\R$-linear combination of the $B_i$ with positive coefficients, and so this implies that $K_Y+B_Y$ is also semi-ample.  This in turn implies that $K_X+B$ is semi-ample.

\end{proof}

\begin{proof}[Proof of Theorem \ref{lc-basepoint-free} in relative case]
We now have a projective contraction $f:X\to Z$.  Let $A$ be an ample divisor on $Z$.  Because $K_X+B$ is big$/Z$, there exits $n\gg0$ such that $K_X+B+nf^*A$ is big.  
Now using Theorem \ref{lc-cone}(2), perhaps after increasing $n$, $K_X+B+nf^*A$ is also globally nef and positive on every curve not contracted$/Z$.  Our ground field is algebraically closed, so it is $F$-finite, infinite and perfect.  By the results of \cite{tanaka_semiample_perturbations} we can find $A'\lin_{\R}nf^*A$ such that $(X,B+A')$ is log canonical.  We may now apply the global case.
\end{proof}

\subsection{Big boundary divisors}

Now we move on to Theorem \ref{lc-mori-contraction}.  The proof follows that of Theorem \ref{lc-basepoint-free} in outline, but differs in detail as we must deal with non-birational morphisms and non pseudo-effective log divisors.

\begin{proof}[Proof of Theorem \ref{lc-mori-contraction} when $Z$ is a point]\ 

\textbf{Step 1}: Set-up.

As before, we first prove the theorem in the $\Q$-boundary, $\Q$-factorial dlt case.  To this end, assume that $(Y,B_Y)$ is a projective $\Q$-factorial dlt $3$-fold over $k$ with $\Q$-boundary $B_Y$.  Assume also that $A_Y$ is a big and semi-ample $\Q$-divisor, such that $K_Y+B_Y+A_Y$ is nef.  We prove that $K_Y+B_Y+A_Y$ is semi-ample.  By \cite{tanaka_semiample_perturbations} we may replace $A_Y$ to assume that $(Y,B_Y+A_Y)$ is log canonical.  We may also assume that $\rddown{B_Y}\neq 0$ by the base-point free theorem.

\textbf{Step 2}:  Run a $K_Y+B_Y+A_Y-\epsilon\rddown{B_Y}$-MMP with scaling of $\rddown{B_Y}$.

In fact, this step consists of showing that there is a way to choose such an LMMP which terminates, and that by taking $\epsilon$ sufficiently small we may assume that each contraction is $K_Y+B_Y+A_Y$-trivial.

First note that by Lemma \ref{lem:LMMP-scaling-num-trivial}, whenever the LMMP with scaling attempts to contract an extremal ray which is not $K_Y+B_Y+A_Y$-trivial, we may replace $\epsilon$ with a smaller number so that we terminate instead.  Lemma \ref{lem:LMMP-scaling-num-trivial} also tells us that our LMMP is also a $K_Y+B_Y+A_Y-\epsilon'\rddown{B_Y}$-MMP for any $\epsilon'\in (0,\epsilon]$.  This means that we are free to replace $\epsilon$ by a smaller number at any point during the LMMP without affecting the validity of the previous steps.

We now show that we can choose such an LMMP which terminates.  As a first step we claim that there is some choice of $K_Y+B_Y+A_Y-\epsilon\rddown{B_Y}$-MMP with scaling of $\rddown{B_Y}$ with the following properties:
\begin{itemize}
\item{Every step is $K_Y+B_Y+A_Y$-trivial.}
\item{It reaches (but may not terminate on) a model $\tilde{Y}$ such that:}
\item{No $K_{\tilde{Y}}+B_{\tilde{Y}}+A_{\tilde{Y}}-\epsilon\rddown{B_{\tilde{Y}}}$-MMP with scaling of $\rddown{B_{\tilde{Y}}}$ which contracts only $K_{\tilde{Y}}+B_{\tilde{Y}}+A_{\tilde{Y}}$-trivial rays will ever contain a divisorial contraction.}
\end{itemize}
 Suppose otherwise.  Then given any model reachable via a $K_Y+B_Y+A_Y-\epsilon\rddown{B_Y}$-MMP which contracts only $K_Y+B_Y+A_Y$-trivial extremal rays, it is possible to find a way to continue contracting only $K_Y+B_Y+A_Y$-trivial rays and reach a divisorial contraction.  By induction this produces an infinite sequence of divisorial contractions, which is impossible.  Therefore there must exist a model $\tilde{Y}$ as described. Thus, however we continue to run our LMMP with scaling from $\tilde{Y}$ then we may assume that every step is a flip.

Let $H_{\tilde{Y}}$ be an ample $\Q$-Cartier divisor on $\tilde{Y}$ such that $K_{\tilde{Y}}+B_{\tilde{Y}}+A_{\tilde{Y}}-\epsilon\rddown{B_{\tilde{Y}}}+H_{\tilde{Y}}$ is nef.  We may apply \cite{tanaka_semiample_perturbations} to assume that $(\tilde{Y},B_{\tilde{Y}}+A_{\tilde{Y}}+H_{\tilde{Y}})$ is log canonical.  Apply Proposition \ref{polytope} to $\tilde{Y}$ and the rational vector space of Weil divisors spanned by $B_{\tilde{Y}}+A_{\tilde{Y}}-\epsilon\rddown{B_{\tilde{Y}}}$, $B_{\tilde{Y}}+A_{\tilde{Y}}$ and $B_{\tilde{Y}}+A_{\tilde{Y}}+H_{\tilde{Y}}-\epsilon\rddown{B_{\tilde{Y}}}$.  Let $\mathcal{L}$ be the polytope and $\alpha$ and $\delta$ be the real numbers obtained in Proposition \ref{polytope}.  Note that by definition, each of $B_{\tilde{Y}}+A_{\tilde{Y}}-\epsilon\rddown{B_{\tilde{Y}}}$, $B_{\tilde{Y}}+A_{\tilde{Y}}$ and $B_{\tilde{Y}}+A_{\tilde{Y}}+H_{\tilde{Y}}-\epsilon\rddown{B_{\tilde{Y}}}$ lie within the polytope $\mathcal{L}$.   Choose $0<\lambda\ll 1$ and let $\mathcal{P}$ be the sub-polytope with vertices given by $B_{\tilde{Y}}+A_{\tilde{Y}}-\lambda\epsilon\rddown{B_{\tilde{Y}}}$, $B_{\tilde{Y}}+A_{\tilde{Y}}$ and $B_{\tilde{Y}}+A_{\tilde{Y}}+\lambda(H_{\tilde{Y}}-\epsilon\rddown{B_{\tilde{Y}}})$.  By taking $\lambda$ sufficiently small we may assume that $||\Delta-(B_{\tilde{Y}}+A_{\tilde{Y}})||<\delta$ for any $\Delta$ in $\mathcal{P}$ where $||\cdot ||$ is as in Proposition \ref{polytope}.  Note that $K_{\tilde{Y}}+B_{\tilde{Y}}+A_{\tilde{Y}}+\lambda(H_{\tilde{Y}}-\epsilon\rddown{B_{\tilde{Y}}})$ is nef, as it is a linear combination of two nef divisors, and we are happy to replace $\epsilon$ by $\lambda\epsilon$ for the purposes of our LMMP.

  Now run a $(\tilde{Y},B_{\tilde{Y}}+A_{\tilde{Y}}-\lambda\epsilon\rddown{B_{\tilde{Y}}})$-MMP with scaling of $\lambda H_{\tilde{Y}}$.  At every stage, so long as we contract no divisor, Proposition \ref{polytope} ensures that every extremal ray contracted is also $K_{\tilde{Y}}+B_{\tilde{Y}}+A_{\tilde{Y}}$-trivial.  This means that this LMMP is also a $K_{\tilde{Y}}+B_{\tilde{Y}}+A_{\tilde{Y}}-\lambda\epsilon\rddown{B_{\tilde{Y}}}$-MMP with scaling of $\rddown{B_{\tilde{Y}}}$, and hence also a  $K_{\tilde{Y}}+B_{\tilde{Y}}+A_{\tilde{Y}}-\epsilon\rddown{B_{\tilde{Y}}}$-MMP with scaling of $\rddown{B_{\tilde{Y}}}$ and so by the construction of $\tilde{Y}$ we indeed never contract a divisor.  This LMMP terminates by klt termination with scaling \cite[1.5]{birkar_waldron} on some model $Y'$, which is either a log minimal model or a Mori fibre space.

\textbf{Step 3}: Case where $(Y',B_{Y'}+A_{Y'}-\epsilon\rddown{B_{Y'}})$ is a Mori fibre space.

Suppose $Y'$ has $K_{Y'}+B_{Y'}+A_{Y'}-\epsilon\rddown{B_{Y'}}$-Mori fibre space structure $g:Y'\to V$, where $V$ is normal.  By construction, $g$ is $K_{Y'}+B_{Y'}+A_{Y'}$-trivial, so the fibres of $g$ intersect $\rddown{B_{Y'}}$ positively and  there is some component $V'$ of $\rddown{B_{Y'}}$ which is surjective to $V$.  By \cite[3.7(4)]{kollar_birational_1998}, $K_{Y'}+B_{Y'}+A_{Y'}$ pulls back from some $\Q$-Cartier divisor $D$ on $V$.  Using Proposition \ref{dlt}, let $\psi:Y''\to Y'$ be a crepant $\Q$-factorial dlt model for $(Y',B_{Y'}+A_{Y'})$, with $V''$ the birational transform of $V'$ on $Y''$. $\psi^*(K_{Y'}+B_{Y'}+A_{Y'})|_{V''}=((g\circ \psi)|_{V''})^*D$, and by Theorem \ref{restrict} the left hand side is semi-ample.  Thus $D$ is semi-ample because $g|_{V'}:V'\to V$ is a surjective projective morphism to a normal variety (see \cite[2.10]{keel_basepoint_1999}).  Thus we are done in this case.

\textbf{Step 4}: Case where $(Y',B_{Y'}+A_{Y'}-\epsilon\rddown{B_{Y'}})$ is a log minimal model.

For a given $\epsilon$, we may assume that $K_{Y'}+B_{Y'}+A_{Y'}-\epsilon\rddown{B_{Y'}}$ is not big by Theorem \ref{lc-basepoint-free}.  We may write $A_{Y'}\lin_\Q C+ E$ for some ample $C$ and effective $E$.  Choosing $\delta$ sufficiently small we may ensure that $$(Y',B_{Y'}-\epsilon\rddown{B_{Y'}}+(1-\delta)A_{Y'}+\delta E)$$ is klt.  The base point free theorem for klt pairs \cite[1.2]{birkar_waldron} now implies that $K_{Y'}+B_{Y'}+A_{Y'}-\epsilon\rddown{B_{Y'}}$ is semi-ample.  As in the proof of Theorem \ref{lc-basepoint-free}, by applying this for various values of $\epsilon$ we obtain a contraction $f:Y'\to V$ satisfying.  
$$K_{Y'}+B_{Y'}+A_{Y'}-\epsilon\rddown{B_{Y'}}\lin_\Q f^*H$$ 
$$K_{Y'}+B_{Y'}+A_{Y'}\lin_\Q f^*(H+\epsilon D)$$
for some $H$ ample on $V$ and $D\geq 0$.  
We show that $H+\epsilon D$ is semi-ample.  If $V$ has dimension less than $2$ this is obvious, so we may assume it has dimension $2$.  $\E(H+\epsilon D)\subset\Supp(D)$, and we claim that $f^{-1}(\Supp (D))=\rddown{B_{Y'}}$.  Suppose there is a $1$-dimensional component $\Gamma$ of $f^{-1}(\Supp(D))$.  $\Gamma$ must be contracted by $f$ because $f$ has connected fibres, and so $\Gamma\cdot \rddown{B_{Y'}}=0$ by the projection formula.  Therefore $\Gamma$ is either contained in $\rddown{B_{Y'}}$ or is completely disjoint from it, which contradicts the connectedness of the fibres of $f$.   Let $g:Y''\to Y'$ be a crepant $\Q$-factorial dlt modification of $(Y',B_{Y'}+A_{Y'})$ with $K_{Y''}+\Delta=g^*(K_{Y'}+B_{Y'}+A_{Y'})$.  As $Y'$ is $\Q$-factorial and $\LCS(Y',B_{Y'}+A_{Y'})=\rddown{B_{Y'}}$, $g^{-1}(\rddown{B_{Y'}})=\rddown{\Delta}$. Theorem \ref{restrict} implies that $(K_{Y''}+\Delta)|_{(f\circ g)^{-1}D}$ is semi-ample.  We may now apply the semi-ampleness criterion \cite[7.1]{birkar_waldron}, derived from Keel's theorem to deduce that $H+\epsilon D$ is semi-ample. 

\textbf{Step 5}: Log canonical pairs with $\R$-boundaries and $\R$-Cartier $A$.

We now work with the log canonicial pair $(X,B)$ with $\R$-boundary $B$ and $\R$-Cartier $A$ from the statement.  Let $\phi:Y\to X$ be a crepant $\Q$-factorial dlt modification of $(X,B)$ which exists by Proposition \ref{dlt}.  We claim that we may replace $A$ by some $A'\lin_\R A$ to assume that $(X,B+A)$ is also log canonical and that $\phi$ is a crepant $\Q$-factorial dlt modification of $(X,B+A)$.  To see this, note that by \cite{tanaka_semiample_perturbations} there is some $A''\lin_\R 2A$ such that $(X,B+A'')$ is log canonical.  Set $A'=\frac{1}{2}A''$.  Any log canonical place of $(X,B+A')$ is then also a log canonical place of $(X,B)$.

We also wish to assume that $A$ is a $\Q$-divisor.  To this end, write $A=\sum a_iA_i$ where $0<a_i\in\R$ and $A_i$ are semi-ample $\Q$-divisors.  Let $a_i=a_i^1+a_i^2$ for each $i$, where we freely choose $0<a_i^1\in\R$ and $0<a_i^2\in \Q$.  Define $A_{\R}=\sum a_i^1A_i$ and $A_{\Q}=\sum a_i^2A_i$.  By choosing $a_i^1$ sufficiently small, we may assume that $A_{\Q}$ is big.  Now we may replace $B$ with $B+A_{\R}$ and $A$ with $A_{\Q}$ to assume $A$ is $\Q$-Cartier.

Define $B_Y$ to be the dlt $\R$-boundary satisfying $K_Y+B_Y=\phi^*(K_X+B)$, and let $A_Y=\phi^*A$ (which is also big and semi-ample).  It is now enough to show that $K_Y+B_Y+A_Y$ is semi-ample when $(Y,B_Y)$ is $\Q$-factorial dlt but $B_Y$ may be an $\R$-boundary.

Let $V'$ be the $\R$-vector space of $\R$-Weil divisors generated by the components of $B_Y$.  Now let $V=V'+A_Y$.  This is an affine space of Weil divisors, so let $\mathcal{L}$ be as defined in Proposition \ref{polytope}.  Let $\mathcal{P}\subset\mathcal{L}$ be those $\Delta\in\mathcal{L}$ such that $(K_Y+\Delta)\cdot R\geq 0$ for all extremal rays $R$ of $\overline{NE}(X)$.  This is a rational polytope by Proposition \ref{polytope}(2).  Label the vertices $B_1+A_Y,...,B_n+A_Y$.  For each $i$, $K_{Y}+B_i+A_Y$ is nef and so we may apply the version of Theorem \ref{lc-mori-contraction} for $\Q$-boundaries to deduce that $K_Y+B_i+A_Y$ is semi-ample.  But as $K_Y+B_Y+A_Y$ can be written as a linear combination of $K_Y+B_i+A_Y$ with positive coefficients, this is also semi-ample.

\end{proof}

\begin{proof}[Proof of Theorem \ref{lc-mori-contraction} in relative case]\ 
 
As in the relative case of Theorem \ref{lc-basepoint-free}, if we let $H$ be the pullback to $X$ of a sufficiently ample divisor on $Z$, $A+H$ is big and semi-ample and by Theorem \ref{lc-cone} $K_X+B+A+H$ is globally nef.  By \cite{tanaka_semiample_perturbations} we may replace $H$ up to $\Q$-linear equivalence so that  $(X,B+A+H)$ is log canonical.  Now we may apply the global case.

\end{proof}

\section{The LMMP}\label{LMMP}

Next we apply Theorem \ref{lc-basepoint-free} to construct flips for log canonical pairs.

\begin{proof}[Proof of Corollary \ref{lcflips}]

Let $(X,B)$ be a projective log canonical $3$-fold pair with flipping contraction $f:X\to Z$.  

$(X,B)$ has a $\Q$-factorial dlt model $g:Y\to X$ where $K_Y+B_Y=g^*(K_X+B)$ by Proposition \ref{dlt}.  Run an LMMP$/Z$ for $(Y,B_Y)$.  If $A$ is an ample divisor on $Z$, $K_Y+B_Y+ng^*f^*A$ is big for $n\gg0$ and so the LMMP terminates on a log minimal model $(Y^+,B_{Y^+})$ by Theorem \ref{EffTerm}.  By Theorem \ref{lc-basepoint-free}, $K_{Y^+}+B_{Y^+}$ is semi-ample$/Z$.  It therefore has a log canonical model $(X^+,B^+)/Z$, which we show is the flip of $(X,B)/Z$.
It suffices to show that no divisors are contracted by $X^+\bir X$ as $K_{X^+}+B_{X^+}$ is ample$/Z$.

Let $\phi:W\to X$ and $\phi^+:W\to X^+$ be a common resolution. 
$$L=\phi^*(K_X+B)-\phi^{+*}(K_{X^+}+B_{X^+})$$ is anti-nef$/Z$ and so by the negativity lemma is effective.  Suppose there is a divisor $E_+$ which is contracted by $X^+\bir X$, and let $E_W$ be its birational transform on $W$.  $L$ must have coefficient zero in $E_W$, for any Weil divisor extracted by $Y\to X$ has log discrepancy $0$ with respect to $(X,B)$, and hence $E_+$ appears with coefficient $1$ in $B_{X^+}$.  Note that  $\phi^{+*}(K_{X^+}+B_{X^+})|_{E_W}$ is big and nef over $Z$, so $L$ intersects negatively with a general curve on $E_W$ which is contracted over $Z$.  There is a family of such curves, because $E_W$ is contracted over $X$ and hence over $Z$.  But this negative intersection is impossible if $L$ has coefficient zero in $E_W$, because $L$ is effective.
\end{proof}

Next we prove the contraction theorem:

\begin{proof}[Proof of Corollary \ref{contraction-theorem}]
Let $A$ be an ample $\R$-divisor on $X$.  For a sufficiently small rational $\epsilon>0$, $R$ is also $K_X+B+\epsilon A$ negative.  By Theorem \ref{lc-cone} there are only finitely many $K_X+B+\epsilon A$-negative extremal rays.  Therefore we may find an $\R$-divisor $H$ such that $R$ is the only $H$-negative extremal ray of  $\overline{NE}_1(X)$.  We may also assume that $A'=H-(K_X+B+\epsilon A)$ is ample (i.e. positive on all of $\overline{NE}_1(X)$).  So we may replace $A$ by $A'+\epsilon A$ such that $R$ is the only $K_X+B+A$-negative extremal ray.

Let $\lambda=\inf\{t:K_X+B+tA\mathrm{\ is\ nef}\}$ (so $\lambda>1$).  $K_X+B+\lambda A$ is positive wherever $K_X+B$ is non-negative, and also positive on every extremal $K_X+B$-negative extremal ray except $R$.  This means the extremal face $(K_X+B+\lambda A)_{=0}\cap\overline{NE}(X)$ must be $R$ itself.  By Theorem \ref{lc-cone} $R$ contains a curve.

By Theorem \ref{lc-mori-contraction}, $K_X+B+\lambda A$ is semi-ample, and so induces the contraction of $R$.
\end{proof}

\begin{proof}[Proof of Corollary \ref{log-minimal-models}]
This is a consequence of all our other results.   Given a log canonical pair $(X,B)$ such that $K_X+B$ is not nef, we may find a $K_X+B$-negative extremal ray using Theorem \ref{lc-cone}.  There is a projective contraction contracting the curves in this ray by Corollary \ref{contraction-theorem}.  If it is a flipping contraction, the flip exists by Corollary \ref{lcflips}.  Finally if $K_X+B\num M\geq 0$ the program terminates by Theorem \ref{EffTerm}.  Under the additional assumption the log minimal model is good by Theorem \ref{lc-basepoint-free}.
\end{proof}

\bibliography{bib}{}
\bibliographystyle{amsplain}
\end{document}